\numberwithin{equation}{section}
\def\pa{\partial}
\let\Re=\undefined\DeclareMathOperator*{\Re}{Re}
\let\Im=\undefined\DeclareMathOperator*{\Im}{Im}
\newcommand{\R}{\mathbb{R}}
\newcommand{\C}{\mathbb{C}}
\newcommand{\la}{\mathcal{L}_a}
\newtheorem{theorem}{Theorem}[section]
\newtheorem{lemma}[theorem]{Lemma}
\newtheorem{proposition}[theorem]{Proposition}
\theoremstyle{definition}
\newtheorem{remark}[theorem]{Remark}
\newcommand{\Extend}[5]{\ext@arrow0099{\arrowfill@#1#2#3}{#4}{#5}}
\begin{document}
\title[3d cubic INLS]{Scattering theory for 3d cubic inhomogeneous NLS with inverse square potential}

\author{Ying Wang}
\address{The Graduate School of China Academy of Engineering Physics,  \ Beijing, \ China, \ 100088}
\email{wsming@bupt.cn}

\begin{abstract}
In this paper, we study the scattering theory for the cubic inhomogeneous Schr\"odinger equations with inverse square potential $iu_t+\Delta u-\frac{a}{|x|^2}u=\lambda |x|^{-b}|u|^2u$ with $a>-\frac14$ and $0<b<1$ in dimension three. In the defocusing case (i.e. $\lambda=1$), we establish the global well-posedness and scattering for any initial data in the energy space $H^1_a(\R^3)$. While for the focusing case(i.e. $\lambda=-1$), we obtain the scattering for the initial data below the threshold of the ground state, by making use of the virial/Morawetz argument as in Dodson-Murphy \cite{DM1} and Campos-Cardoso \cite{CC} that avoids the use of interaction Morawetz estimate.

\end{abstract}

 \maketitle

\begin{center}
 \begin{minipage}{100mm}
   { \small {{\bf Key Words:}  Inhomogeneous Schr\"odinger equation; inverse square potential;  scattering;  Strichartz estimate; scattering criterion;
  Virial/Morawetz estimate.}
      {}
   }\\
    { \small {\bf AMS Classification:}
      {35P25,  35Q55, 47J35.}
      }
 \end{minipage}
 \end{center}


\section{Introduction}

\noindent
In this paper, we study the Cauchy problem for the cubic inhomogeneous Schr\"odinger equations with inverse square potential  of the form
\begin{equation}\label{equ:nls}
  \begin{cases}
  i\pa_tu-\la u=\lambda |x|^{-b}|u|^2u,\quad (t,x)\in\R\times\R^3\\
  u(0,x)=u_0(x)\in H^1(\R^3),
  \end{cases}
\end{equation}
where $u:\;\R\times\R^3\to\C,\;0<b<1,\;\la=-\Delta+\tfrac{a}{|x|^2}$ with $a>-\tfrac14,\;\lambda\in\{\pm1\},$   $\lambda=1$ corresponding to the defocusing case, and $\lambda=-1$ corresponding to the focusing case.
The scale-covariance elliptic operator $\la$
appearing in \eqref{equ:nls} plays a key role  in many problems of
physics and geometry. Especially, the heat and Schr\"odinger flows for the
elliptic operator $\la$ have been studied in the
theory of combustion (see \cite{LZ}), and in quantum mechanics (see
\cite{KSWW}).

Equation \eqref{equ:nls} admits a number of symmetries in energy space $H^1$, explicitly:

$\bullet$ {\bf Time translation invariance:} if $u(t,x)$ solves \eqref{equ:nls}, then so does $u(t+t_0,x),~t_0\in\R$;

$\bullet$ {\bf Phase invariance:} if $u(t,x)$ solves \eqref{equ:nls}, then so does $e^{i\gamma}u(t,x),~\gamma\in\R.$

From the Ehrenfest law or direct computation, these two symmetries induce invariances in
the energy space $H^1(\R^3)$, namely
\begin{align*}
&{\rm Mass:}~~M(u)=\int_{\R^3}|u(t,x)|^2\;dx=M(u_0);\\
&{\rm Energy:}~~E_a(u)=\int_{\R^3}\Big(\frac12|\nabla u(t,x)|^2+\frac{a}{2}\frac{|u|^2}{|x|^2}+\frac{\lambda}{4}\frac{|u(t,x)|^{4}}{|x|^b}\Big)\;dx=E_a(u_0).
\end{align*}

$\bullet$ {\bf Scaling invariance:} if $u(t,x)$ solves \eqref{equ:nls}, then so does $u_\mu(t,x)$ defined by
\begin{equation}\label{equ:scale}
u_\mu(t,x)=\mu^{\frac{2-b}2}u(\mu^2t, \mu x),\quad\mu>0.
\end{equation}
This scaling defines a notion of \emph{criticality} for \eqref{equ:nls}. In particular, one can check that
the only homogeneous $L_x^2$-based Sobolev space that is left invariant under \eqref{equ:scale} is $\dot{H}_x^{s_c}(\R^3)$, where the \emph{critical regularity} $s_c$ is given by $s_c:=\frac{1+b}2$,
$$\big\|u_\mu(t,\cdot)\big\|_{\dot
H^{s_c}_x(\R^3)}=\|u(\mu^2t,\cdot)\|_{\dot H^{s_c}_x(\R^3)}.$$
For $s_c=1$, we call the problem \eqref{equ:nls} \emph{energy critical}. For $s_c=0$, we call the problem \emph{mass critical}, while for $0<s_c<1$ we call the problem \emph{mass supercritical} and \emph{energy subcritical}. In this paper, we will focus on the case $\tfrac12<s_c<1$, i.e. $0<b<1$. To state our main result, we first recall the history of the study for the problem  \eqref{equ:nls}.

When $a=0$, we write the problem \eqref{equ:nls} as
\begin{equation}\label{equ:inls}
  i\pa_tu+\Delta u=\lambda|x|^{-b}|u|^2u,\quad (t,x)\in\R\times\R^3.
\end{equation} This model can be thought of as modeling inhomogeneities in
the medium in which the wave propagates, see \cite{KMVBT}. Especially, when $b=0$, the local and global well-posedness for the Schr\"odinger equation \eqref{equ:inls} has been intensively studied in
\cite{C21,Cav,CaW,CKSTT04,DM1,DM2,DHR,HR,TVZ}. First, Cazenave and  Weissler \cite{CaW} proved that
\eqref{equ:inls} with $b=0$ is locally wellposed in $H^1(\R^3)$. On the other
hand, since the lifespan of the local solution depends only on the
$H^1$-norm of the initial data, one can easily obtain the global
wellposedness for \eqref{equ:inls} with $b=0$
from the conservation of mass and energy when $\lambda=1$. Furthermore,  Ginibre and Velo \cite{GV85} proved
the scattering  by making use of the
almost finite propagation speed
$$\int_{|x|\geq a} |u(t,x)|^2 \; dx \leq \int \min\left(\frac{|x|}{a},
1\right) |u(t_0)|^2 \; dx + \frac{C}{a} \cdot |t-t_0|$$ for large
spatial scale and the classical  Morawetz inequality in  \cite{LS}
\begin{equation}\label{cmrsz}
\iint_{\R\times \R^3}\frac{|u(t,x)|^{4}}{|x|} \; dtdx \lesssim
\|u\|_{L_t^\infty\dot H^\frac12}^2\lesssim C\big(M(u_0), E_0(u_0)\big)
\end{equation} for small spatial scale. Lately, one can give another simple proof by using I-term's interaction Morawetz estimate\cite{CKSTT04}
\begin{equation}\label{equ:intmor}
  \|u\|_{L_{t,x}^4(\R\times\R^3)}\leq C\big(M(u_0), E_0(u_0)\big).
\end{equation}
 While for the focusing case $\lambda=-1$, under the assumption
\begin{equation}\label{equ:assum3dnls}
  M(u_0)E_0(u_0)<M(Q)E_0(Q)\quad\text{and}\quad \|u_0\|_{L^2}\|u_0\|_{\dot{H}^1}<\|Q\|_{L^2}\|Q\|_{\dot{H}^1}
\end{equation}
with $Q$ being the ground state of the elliptic equation $-\Delta Q+Q=Q^3$, Holmer and Roudenko \cite{HR} utilized concentration compactness/rigity method to obtain the global well-posedness and scattering result for the radial initial date. Lately, this was extended to the nonradial inital data by Duyckaerts-Holmer-Roudenko \cite{DHR}. Recently, Dodson-Murphy \cite{DM1,DM2} gave another simple proof that avoids the use of concentration compactness by making use of virial/Morawetz estimate. When $b\neq0$, Farah and Guzm\'an  \cite{FG} established the  scattering result for the radial initial date below the ground state. This was extended to the nonradial initial data by
 Miao, Murphy, and Zheng \cite{MMZ} by exploiting the embedding of nonlinear profile. Recently, Campos and Cardoso \cite{CC} gave a simple proof of the result in \cite{MMZ} by using Dodson-Murphy's new method in \cite{DM1} and developing the decay $|x|^{-b}$ in the nonlinear term that avoids the use of interaction Morawetz estimate. In this paper, we will utilize the argument as in \cite{CC} to consider the case that $a>-\tfrac14$ and $0<b<1$.

 When $b=0$, we can write the equation \eqref{equ:nls} as
 \begin{equation}\label{equ:nlsa}
   i\pa_tu-\la u=\lambda|u|^2u,\quad (t,x)\in\R\times\R^3.
 \end{equation}
This model appears in several physical settings, such a quantum field equations or black hole solutions of the
Einstein's equations (see \cite{KSWW} and the
references therein). Fanelli, Felli, Fontelos, and Primo proved in \cite{FFFP1} investigated the validity of the time-decay estimate for the Schr\"odinger evolution and proved that it holds, in some specific cases, including the inverse square potential. The Strichartz estimates, an essential tool for studying the behaviour of solutions to nonlinear dispersive equations, have been developed by
Burq-Planchon-Stalker-Tahvildar-Zadeh \cite{BPSS} by using the perturbation method. Zhang-Zheng
\cite{ZZ} studied the defocusing case($\lambda=1$), establishing well posedness and scattering for $a\geq0.$ While for the focusing case: $\lambda=-1$, Killip, Visan, Murphy and Zheng \cite{KMVZ} generalized Duyckaerts-Holmer-Roudenko's result of \cite{DHR} to the problem \eqref{equ:nlsa} with $\lambda=-1$ by assuming that
\begin{equation}\label{equ:assumnlsa}
  M(u_0)E_a(u_0)<M(Q_{a\wedge 0})E_{a\wedge 0}(Q_{a\wedge 0}),\;\|u_0\|_{L_x^2}\| u_0\|_{\dot H_a^1}<\|Q_{a\wedge 0}\|_{L_x^2}\|Q_{a\wedge 0}\|_{\dot H_{a\wedge 0}^1},
\end{equation}
where $A\wedge B = \min\{A,B\}$, and  $Q_a$ solves the elliptic problem
\begin{equation}\label{ell}
\la Q_a + Q_a =|Q_a|^2Q_a,
\end{equation}
and
\begin{equation}\label{equ:doth1def}
  \|f\|_{\dot{H}^1_a}^2:=\big\|\sqrt{\la}f\big\|_{L_x^2(\R^3)}^2=\int_{\R^3}\big(|\nabla f|^2+a\tfrac{|f|^2}{|x|^2}\big)\;dx.
\end{equation}
We remark that the assumption \eqref{equ:assumnlsa} follows from the sharp Gagliardo-Nirenberg inequality:
\begin{equation}\label{equ:gna}
\|f\|_{L_x^{4}(\R^3)}^4 \leq C_a
\|f\|_{L_x^2(\R^3)}
\|\sqrt{\la}f\|_{L_x^2(\R^3)}^3.
\end{equation}

When $a\neq0$ and $b\neq0$, the problem \eqref{equ:nls} is a model from various physical contexts, for
example, in nonlinear optical systems with spatially dependent interactions (see \cite{BPVT} and the
references therein).  Recently, using the following Gagliardo-Nirenberg-type estimate
\begin{equation}\label{equ:gnab}
  \int_{\R^3}\frac{|f(x)|^4}{|x|^b}\;dx\leq C_a\|f\|_{L_x^2(\R^3)}^{1-b}
\|\sqrt{\la}f\|_{L_x^2(\R^3)}^{3-b},\;0<b<1,
\end{equation}  Campos and Guzm\'an \cite{CG} established sufficient conditions for global
existence and blow-up in the energy space $H^1_a=\dot{H}^1_a\cap L^2$ for the focusing case: $\lambda=-1$, by assuming that
\begin{equation}\label{equ:u0iniassum}
  M(u_0)E_a(u_0)<M(Q_{a,b})E_a(Q_{a,b}).
\end{equation}
Furthermore, they also showed that the equality in \eqref{equ:gnab} above is attained by a function $Q_{a,b}\in H^1_a$, which is a positive solution to the
elliptic equation
\begin{equation}\label{equ:qaelliptic}
  \la Q_{a,b}+Q_{a,b}=|x|^{-b}|Q_{a,b}|^2Q_{a,b}.
\end{equation}
In this paper, we aim to study the scattering theory for the problem \eqref{equ:nls} in the defocusing case $\lambda=1$, and establish the scattering result for the global solution obtained in \cite{CG} for the focusing case $\lambda=-1$. More precisely, our main result is as follows.

\begin{theorem}[Main result]\label{thm:main}
Let $a>-\tfrac14,\;0<b<1,\;\lambda\in\{\pm1\},\;u_0\in H^1_a(\R^3)$. In the focusing case $\lambda=-1$, we also assume that
\begin{equation}\label{equ:assumpinit}
  M(u_0)E_a(u_0)<M(Q_{a,b})E_a(Q_{a,b}),\;\|u_0\|_{L^2}\|u_0\|_{\dot{H}^1_a}<\|Q_{a,b}\|_{L^2}\|Q_{a,b}\|_{\dot{H}^1_a}.
\end{equation}
Then, there is a unique global solution $u\in C(\R,H^1_a)\cap L_t^4L_x^6(\R\times\R^3)$ to \eqref{equ:nls} and it scatters in the sense that there exists $u_\pm\in H^1_a(\R^3)$ such that
\begin{equation}\label{equ:scattesinse}
  \lim_{t\to\pm\infty}\big\|u(t,\cdot)-e^{it\la}u_\pm\big\|_{H^1_a}=0.
\end{equation}

\end{theorem}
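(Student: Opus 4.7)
The plan is to follow the Dodson-Murphy virial/Morawetz framework as implemented in the non-potential inhomogeneous setting by Campos-Cardoso \cite{CC}, extended to the inverse-square operator $\la$. First I would set up a robust local theory and small-data scattering in $H^1_a$: using the Strichartz estimates for $e^{-it\la}$ of Burq-Planchon-Stalker-Tahvildar-Zadeh and the equivalence $\|\sqrt{\la}\,\cdot\|_{L^2}\sim \|\nabla\cdot\|_{L^2}$ valid for $a>-\tfrac14$, combined with weighted H\"older/Hardy estimates absorbing the $|x|^{-b}$ factor, one runs a standard contraction in $L^\infty_tH^1_a\cap L^4_tL^6_x$. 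This yields well-posedness, stability, and the scattering reduction: any solution with $\|u\|_{L^4_tL^6_x(\mathbb{R}\times\mathbb{R}^3)}<\infty$ scatters in $H^1_a$.

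Next I would obtain the uniform bound $\sup_{t}\|u(t)\|_{H^1_a}\lesssim 1$. In the defocusing case this is immediate from conservation of $M$ and $E_a$. In the focusing case I would use the sharp Gagliardo-Nirenberg inequality \eqref{equ:gnab} together with the Pohozaev identity for $Q_{a,b}$ from \eqref{equ:qaelliptic} to rewrite $M(u)E_a(u)$ as a coercive function of $\|u\|_{L^2}\|\sqrt{\la}u\|_{L^2}$; the subthreshold hypothesis \eqref{equ:assumpinit} together with continuity in time then forces the energy trapping
\begin{equation*}
\|u(t)\|_{L^2}\|\sqrt{\la}u(t)\|_{L^2}\leq (1-\delta)\|Q_{a,b}\|_{L^2}\|\sqrt{\la}Q_{a,b}\|_{L^2},\qquad t\in\mathbb{R},
\end{equation*}
for some $\delta>0$, and a parallel lower bound on the coercive quadratic form that will appear in the Morawetz estimate.

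The heart of the argument is a Dodson-Murphy type scattering criterion adapted to $\la$: assuming the uniform $H^1_a$ bound, if
\begin{equation*}
\liminf_{T\to\infty}\frac{1}{T}\int_0^T\int_{|x|\leq R}\frac{|u(t,x)|^4}{|x|^b}\,dx\,dt\longrightarrow 0\quad\text{as } R\to\infty,
\end{equation*}
then $u$ scatters. I would prove this via Duhamel on short slices, using Strichartz for $\la$ and splitting the nonlinearity at $|x|=R$; the exterior piece is controlled by the decay $|x|^{-b}$ alone, without any radial/symmetry assumption, which is the key Campos-Cardoso device that replaces the interaction Morawetz estimate.

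To verify the criterion I would run a virial/Morawetz estimate with a radial cutoff $\phi_R(x)$ equal to $|x|^2/2$ on $|x|\leq R$ and constant on $|x|\geq 2R$, and consider $M_R(t)=2\,\Im\int\bar u\,\nabla\phi_R\cdot\nabla u\,dx$. Differentiating and using \eqref{equ:nls} produces the positive bulk density $\int_{|x|\leq R}\bigl(|\nabla u|^2+a|u|^2/|x|^2\bigr)\,dx+c\int_{|x|\leq R}|x|^{-b}|u|^4\,dx$ against error terms supported in $R\leq|x|\leq 2R$; integrating on $[0,T]$ and using $|M_R(t)|\lesssim R$ gives exactly the spacetime quantity demanded by the criterion. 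The main obstacle I expect is the Morawetz commutator calculation: the singular potential $a/|x|^2$ interacts nontrivially with the weight $\phi_R$, producing additional terms that must be controlled via the sharp Hardy inequality for $\la$ and the trapping of the previous step; in the focusing case, one has to verify that the coercive lower bound survives multiplication by the spatial cutoff, which is where the energy-trapping margin $\delta$ is consumed.
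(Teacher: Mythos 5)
Your overall strategy coincides with the paper's: quote the Campos--Guzm\'an global theory and coercivity, prove a Dodson--Murphy/Campos--Cardoso scattering criterion, and verify it with a truncated virial/Morawetz estimate whose error terms are $O(R/T+R^{-b})$. The virial part of your sketch is essentially the paper's Section 4, including the correct identification of the two delicate points (the $a|x|^{-2}$ commutator term, handled by Hardy and by the bound $R\int_{|x|>R}|u|^2|x|^{-3}\,dx\lesssim R^{-2}$, and the survival of coercivity after multiplying by the spatial cutoff, which is where Proposition \ref{P:coercive} and the identity \eqref{equ:identity} are consumed).

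The genuine gap is in your proof sketch of the scattering criterion. You propose to establish it ``via Duhamel on short slices, using Strichartz for $\la$,'' but Strichartz estimates alone cannot make the far-past Duhamel contribution
\begin{equation*}
F_1(t)=i\int_0^{T-\epsilon^{-\alpha}}e^{i(t-s)\la}\bigl(|x|^{-b}|u|^2u\bigr)(s)\,ds,\qquad t>T,
\end{equation*}
small: Strichartz only gives $\|F_1\|_{L_t^2L_x^6\cap L_t^\infty L_x^2}\lesssim E$, with no gain as the gap $\epsilon^{-\alpha}$ grows. The smallness must come from the $L^1\to L^\infty$ dispersive estimate for $e^{it\la}$ (Lemma \ref{thm:disp}, due to Fanelli--Felli--Fontelos--Primo), which yields $\|F_1(t)\|_{L^\infty}\lesssim|t-T+\epsilon^{-\alpha}|^{-1/2+\sigma_1}$ and then, after interpolation with the Strichartz bound, $\|F_1\|_{L_t^4L_x^6([T,\infty))}\lesssim\epsilon^{\frac13(\frac12-\sigma_1)\alpha}$. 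This is not a cosmetic omission: for $-\tfrac14<a<0$ the dispersive estimate only holds with the weights $(1+|x|^{-\sigma})^{\pm1}$, which forces the weighted $L^1$ bounds on $|x|^{-b}|u|^2u$ and the splitting into $|x|\leq1$ and $|x|\geq1$ carried out in \eqref{equ:pgeq2-1-1}--\eqref{equ:pgeq2}, and it is precisely the reason the theorem is restricted to dimension three. Separately, your criterion is phrased as smallness of the time-averaged potential energy, whereas the paper's criterion \eqref{equ:masslim} is smallness of the mass near the origin along a sequence of times; to run the recent-past estimate for $F_2$ one needs the mass form, propagated over the whole slice $I_2$ by the almost conservation law $|\partial_t\int\chi_R|u|^2\,dx|\lesssim R^{-1}$, and the passage from potential-energy decay (Lemma \ref{lem:poentialenergdec}) to mass decay via H\"older is a step your outline skips.
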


\begin{remark}
$(i)$ By the same argument as in this paper, one can also obtain the similar result for the general nonlinear term $|x|^{-b}|u|^{p-1}u$. The restriction for dimension three stems from the dispersive estimate for the Schr\"odinger operator $e^{it\la}$, which was established in \cite{FFFP1} for dimension three. The dispersive estimates in higher dimensions large than four are still open. And the dispersive estimate for the radial initial data was obtained in Zheng \cite{Zheng}. Combining such dispersive estimate with the argument as in this paper, one can get the similar result in higher dimensions but for the radial initial data.

$(ii)$ In Appendix, we will first establish the interaction Morawetz estimate in the defocusing case. As an application, we give a simple proof for the scattering theory for \eqref{equ:nls} with $\lambda=1$ but $a>0$. While in Theorem \ref{thm:main}, we can get the scattering result for the negative $a$ by making use of the virial/Morawetz argument.
\end{remark}

\underline{\bf Outline of the proof:}  By using Strichartz estimates, the fixed point argument, mass/energy conservation and variational analysis, Campos and Guzm\'an \cite{CG} have established the global solution for the problem \eqref{equ:nls} under the assumption \eqref{equ:assumpinit}. Therefore, we only need to show the scattering part of Theorem \ref{thm:main}. To do this, we will establish a scattering criterion by following the argument as in \cite{CC,DM1,Tao}.
\begin{lemma}[Scattering criterion]\label{lem:scatcriterion}
Suppose $u:~\R\times\R^3\to\mathbb{C}$ is a global solution to
\eqref{equ:nls} satisfying
\begin{equation}\label{equ:uhasuni}
\sup_{t\in\R}\|u\|_{H^1_a(\R^3)}\leq E.
\end{equation}
There exist $\epsilon=\epsilon(E)>0$ and $R=R(E)>0$ such that if
\begin{equation}\label{equ:masslim}
\liminf\limits_{t\to\infty}\int_{|x|<R}|u(t,x)|^2\;dx\leq\epsilon^2,
\end{equation}
then, $u$ scatters forward in time.
\end{lemma}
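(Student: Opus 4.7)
The plan is to adapt the Dodson--Murphy / Campos--Cardoso scattering-criterion scheme to the inverse-square setting. By Strichartz theory for $e^{it\la}$ (as in \cite{BPSS}) and a standard small-data continuation argument, scattering of $u$ forward in time reduces to showing
\[
\bigl\|e^{i(t-T)\la}u(T)\bigr\|_{L_t^4L_x^6([T,\infty)\times\R^3)}\leq\eta(E)
\]
at some time $T$, for a small threshold $\eta(E)$. Fixing $R=R(E)$ and $\epsilon=\epsilon(E)$ in terms of $E$, the hypothesis \eqref{equ:masslim} furnishes arbitrarily large $T$ with $\int_{|x|<R}|u(T,x)|^2\,dx\leq\epsilon^2$, and the task is to use such a $T$ together with an auxiliary scale $T_0$ to realize the smallness above.

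A backward Duhamel expansion followed by splitting the integral at $T-T_0$ yields
\[
e^{i(t-T)\la}u(T) = e^{it\la}u_0 - i\lambda\int_0^{T-T_0}e^{i(t-s)\la}F(u(s))\,ds - i\lambda\int_{T-T_0}^T e^{i(t-s)\la}F(u(s))\,ds,
\]
with $F(u)=|x|^{-b}|u|^2u$. The first term is small in $L_t^4L_x^6([T,\infty))$ for $T$ large by Strichartz plus density of Schwartz functions in $H^1_a$. The middle ``far-past'' term has $t-s\geq T_0$ throughout, so the dispersive estimate $\|e^{i\tau\la}f\|_{L^\infty_x}\lesssim|\tau|^{-3/2}\|f\|_{L^1_x}$ of \cite{FFFP1}, combined with the Hardy/Sobolev bound $\|F(u)\|_{L^1_x}\lesssim\|u\|_{H^1_a}^3\lesssim E^3$ (valid for $0<b<1$), interpolated against a routine Strichartz bound for the same Duhamel integral, gives an $O(T_0^{-\delta})$ contribution in $L_t^4L_x^6([T,\infty))$ for some $\delta>0$.

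The ``near-$T$'' piece is where \eqref{equ:masslim} enters. By inhomogeneous Strichartz it is bounded by $\|F(u)\|_{L_t^{q'}L_x^{r'}([T-T_0,T]\times\R^3)}$ for a dual Strichartz pair. Split $F(u)=F(u)\mathbf{1}_{|x|\geq R}+F(u)\mathbf{1}_{|x|<R}$; the exterior part uses the gain $|x|^{-b}\leq R^{-b}$ and contributes $\lesssim R^{-b}T_0^{1/q'}E^3$, while the interior part is controlled via H\"older, Sobolev and \eqref{equ:gnab} by a positive power of $\|u(s)\|_{L^2(|x|<R)}$. To propagate the smallness at $t=T$ to all of $[T-T_0,T]$ one uses a truncated-mass identity: since $a/|x|^2$ is real and commutes with multiplication by a radial cutoff $\chi(|x|/R)$, a direct computation yields
\[
\Big|\frac{d}{dt}\int\chi(|x|/R)|u|^2\,dx\Big|\lesssim R^{-1}\|u\|_{L^2}\|\nabla u\|_{L^2}\lesssim R^{-1}E^2,
\]
where the last step uses the equivalence $\|\nabla u\|_{L^2}\sim\|u\|_{\dot H^1_a}$ for $a>-1/4$. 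Thus $\|u(s)\|_{L^2(|x|<R)}^2\lesssim\epsilon^2+T_0R^{-1}E^2$ uniformly on $s\in[T-T_0,T]$, and the interior contribution is bounded by $T_0^{1/q'}(\epsilon^2+T_0R^{-1}E^2)^{\theta/2}E^{3-\theta}$ for some $\theta>0$.

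The main obstacle is parameter calibration, which is what determines the relations between $\epsilon(E)$, $R(E)$ and $\eta(E)$: $T_0$ must be large enough for the dispersive gain $T_0^{-\delta}$ to beat $\eta$; $R$ must satisfy $R^{-b}T_0^{1/q'}\ll\eta$; and the combination $T_0R^{-1}E^2$ must be dominated by $\epsilon^2$ so that the interior piece remains $\lesssim\eta$. Setting $T_0=R^{\alpha}$ with a suitable $\alpha\in(0,1)$ and $\epsilon^2\sim R^{\alpha-1}E^2$, then taking $R$ sufficiently large in terms of $E$ and $\eta(E)$, balances all three constraints simultaneously. Verifying that the exponent $\theta$ arising from the H\"older--Sobolev split of the interior region is strictly positive, which follows from $0<b<1$ together with the subcritical Sobolev embedding $H^1\hookrightarrow L^p$ for $p<6$, is the only nontrivial routine computation.
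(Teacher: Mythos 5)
Your overall strategy coincides with the paper's: reduce scattering to smallness of $\|e^{i(t-T)\mathcal{L}_a}u(T)\|_{L_t^4L_x^6([T,\infty))}$, split the Duhamel integral at $T-T_0$, kill the far-past piece with the dispersive estimate interpolated against Strichartz, and control the near-$T$ piece by propagating the small local mass backward over $[T-T_0,T]$ via the truncated mass identity and splitting $|x|<R$ from $|x|\geq R$. However, there are two concrete gaps. First, you invoke only the unweighted dispersive bound $\|e^{i\tau\mathcal{L}_a}f\|_{L^\infty_x}\lesssim|\tau|^{-3/2}\|f\|_{L^1_x}$, which holds only for $a\geq0$; for $-\tfrac14<a<0$ one must use the weighted estimate of Fanelli--Felli--Fontelos--Primo, $\|(1+|x|^{-\sigma})^{-1}e^{i\tau\mathcal{L}_a}f\|_{L^\infty}\lesssim|\tau|^{-3/2+\sigma}\|(1+|x|^{-\sigma})f\|_{L^1}$ with $\sigma=\tfrac12-\sqrt{\tfrac14+a}$, absorb the weight into the nonlinearity (which forces the H\"older splitting near the origin to accommodate $|x|^{-(\sigma+b)}$), and accept the degraded decay $|t-s|^{-3/2+\sigma}$ — still integrable since $\sigma<\tfrac12$. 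As written your argument does not cover negative $a$, which is the main novelty of the statement.

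Second, the step ``by inhomogeneous Strichartz the near-$T$ piece is bounded by $\|F(u)\|_{L_t^{q'}L_x^{r'}}$ for a dual Strichartz pair'' is not legitimate: $(4,6)$ is \emph{not} an admissible pair in $d=3$ (it sits at $\dot H^{1/2}$ regularity, $\tfrac2q+\tfrac3r=1$), and the paper's Strichartz proposition only covers admissible exponents. One must instead estimate $\||\nabla|^{1/2}(|x|^{-b}|u|^2u)\|_{L_t^2L_x^{6/5}}$, pass through the equivalence of Sobolev spaces for $\mathcal{L}_a$ and the fractional chain rule, and then check that the smallness extracted from $\|u(s)\|_{L^2(|x|<R)}\lesssim\epsilon$ survives the interpolation between the zero-derivative and one-derivative bounds (the one-derivative bound carries no smallness, only a factor $|I_2|^{1/2}=T_0^{1/2}$, so the parameter calibration must absorb it). This is precisely where the paper's choice $\alpha=\min\{\tfrac{1-b}2,\tfrac{2b}{2+b}\}$ comes from; your calibration paragraph does not account for the loss from this interpolation, so the claim that $\theta>0$ suffices is not yet a proof.
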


By Lemma \ref{lem:scatcriterion}, we are reduced to verify the assumption \eqref{equ:masslim}.  By H\"older's inequality, \eqref{equ:masslim}  follows from the following lemma.
\begin{lemma}\label{lem:poentialenergdec}
There exists a sequence of times $t_n\to\infty$ and a sequence of radii $R_n\to\infty$ such that
\begin{equation}\label{equ:potendecay}
\lim_{n\to\infty}\int_{|x|\leq R_n}\frac{|u(t_n,x)|^4}{|x|^b}\;dx=0.
\end{equation}
\end{lemma}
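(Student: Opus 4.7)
The plan is a truncated virial/Morawetz argument following Dodson--Murphy \cite{DM1} and Campos--Cardoso \cite{CC}. I would fix a radial smooth cutoff $\phi_R(x)=R^2\chi(|x|/R)$, with $\chi(r)=r^2$ for $r\le 1$ and $\chi$ constant for $r\ge 2$, so that $\phi_R(x)=|x|^2$ on $B_R$ and $|\pa^\alpha\phi_R|\lesssim R^{2-|\alpha|}$. Set $V_R(t)=\int\phi_R|u|^2\,dx$ and $M_R(t):=V_R'(t)=2\Im\int\bar u\,\nabla\phi_R\cdot\nabla u\,dx$; by Cauchy--Schwarz and the a priori $H^1_a$ bound (from energy conservation in the defocusing case, and from Campos--Guzm\'an \cite{CG} in the focusing case), $|M_R(t)|\lesssim R$ uniformly in $t$.

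Next I would compute $V_R''(t)$ via the virial identity for \eqref{equ:nls}. On $B_R$, where $\phi_R=|x|^2$, the Hessian gives the kinetic piece $8\int|\nabla u|^2$, the coupling $\nabla\phi_R\cdot\nabla(a/|x|^2)$ contributes $8a\int|u|^2/|x|^2$, and $\Delta\phi_R=6$ together with $\nabla\phi_R\cdot\nabla(|x|^{-b})=-2b|x|^{-b}$ produces the nonlinear term $-2(3-b)\lambda\int|x|^{-b}|u|^4$. On the annulus $A_R=\{R\le|x|\le 2R\}$, the bounds $|\pa^\alpha\phi_R|\lesssim R^{2-|\alpha|}$, $|\nabla(|x|^{-b})|\lesssim R^{-b-1}$, the conservation laws, and the Gagliardo--Nirenberg inequality \eqref{equ:gnab} furnish an error of order $R^{-c}$ for some $c=c(a,b)>0$. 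In total,
$$V_R''(t)\ge 8\|u(t)\|_{\dot H^1_a(B_R)}^2-2(3-b)\lambda\int_{B_R}\frac{|u(t,x)|^4}{|x|^b}\,dx-CR^{-c}.$$
In the focusing case $\lambda=-1$ both principal terms have the favorable sign and I immediately obtain $V_R''(t)\ge 2(3-b)\int_{B_R}|x|^{-b}|u|^4\,dx-CR^{-c}$; in the defocusing case $\lambda=1$ I would pass to a modified weight, matching $|x|^2$ near the origin but growing only linearly at infinity (Lin--Strauss profile), so that the virial still yields a positive multiple of $\int_{B_R}|x|^{-b}|u|^4$ up to comparable tail errors.

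To conclude, I would integrate this lower bound on $[T,T+R^{1+\eps}]$. Since $|M_R(T')-M_R(T)|\lesssim R$,
$$\frac{1}{R^{1+\eps}}\int_T^{T+R^{1+\eps}}\!\!\int_{|x|\le R}\frac{|u(t,x)|^4}{|x|^b}\,dx\,dt\lesssim R^{-\eps}+R^{-c}.$$
Taking $R_n,T_n\to\infty$ and applying the mean value theorem, one extracts $t_n\in[T_n,T_n+R_n^{1+\eps}]$ (hence $t_n\to\infty$) with $\int_{|x|\le R_n}|x|^{-b}|u(t_n,x)|^4\,dx\to 0$, which is the desired claim.

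The hard part will be the careful control of the annular error: derivatives of $\phi_R$ on $A_R$ cost factors up to $R^2$ that must be beaten by the decay $|\nabla(|x|^{-b})|\sim R^{-b-1}$ together with \eqref{equ:gnab}. A secondary concern when $a<0$ is the sign of $8a\int|u|^2/|x|^2$; this is handled via the sharp Hardy inequality and the standing hypothesis $a>-\tfrac14$, which together ensure that $\|\cdot\|_{\dot H^1_a}^2$ remains coercive on the standard kinetic energy.
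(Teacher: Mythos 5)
There is a genuine gap, and it is a sign error at the heart of the argument. In the virial identity \eqref{equ:virialdef}--\eqref{equ:vtx2} the nonlinear term enters as $+\lambda\,c_b\int |x|^{-b}|u|^4\,dx$ with $c_b>0$, so in the focusing case $\lambda=-1$ this term is \emph{negative} (this is exactly what drives Glassey-type blow-up), while in the defocusing case $\lambda=1$ it is positive. Your computation produces the coefficient $-2(3-b)\lambda$, which has the opposite overall sign, and leads you to the claim that in the focusing case ``both principal terms have the favorable sign.'' They do not. As a consequence your proposal never invokes the sub-threshold hypothesis \eqref{equ:assumpinit}, which cannot be right: above the threshold, focusing solutions may blow up, so the lemma is false without that hypothesis. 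The missing ingredient is the coercivity statement, Proposition \ref{P:coercive}(ii), which converts $\|v\|_{\dot H^1_a}^2-\int|x|^{-b}|v|^4\,dx$ into $c\int|x|^{-b}|v|^4\,dx$ for sub-threshold data. Moreover, since the virial identity only controls the kinetic energy on $\{|x|\le R/2\}$, the coercivity must be applied to the localized function $\chi_R u$ rather than to $u$; this requires the identity \eqref{equ:identity} to pass from $\int\chi_R^2|\nabla u|^2$ to $\|\chi_R u\|_{\dot H^1_a}^2$ up to $O(R^{-2})$ errors, together with the observation that $\chi_R u$ inherits the sub-threshold condition. This localization step is the crux of the Dodson--Murphy/Campos--Cardoso scheme and is entirely absent from your outline.

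A secondary but real problem is your choice of weight in the focusing case. If $\phi_R$ is constant for $|x|\ge 2R$, then $\partial_r\phi_R$ decreases from $2R$ to $0$ across the annulus, so $\partial_r^2\phi_R$ is negative of size $O(1)$ there, and the Hessian term $4\Re(\bar u_j u_k)(\phi_R)_{jk}$ contributes an annular error controlled only by $\int_{A_R}|\nabla u|^2=O(1)$ --- not $O(R^{-c})$ --- since the data is not assumed radial and no radial Sobolev decay is available. The paper avoids this by taking the convex weight $w_R$ of \eqref{equ:wrdef}--\eqref{equ:wrcond}, equal to $|x|^2$ near the origin and growing \emph{linearly} (not constant) at infinity, so that the annular Hessian term is nonnegative and can be discarded, leaving only the lower-order errors $R|u|^2/|x|^3$ and $R|u|^4/|x|^{1+b}$, which are genuinely $O(R^{-b})$. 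Your final step (integrating in time, bounding $\sup_t|M_R(t)|\lesssim R$, and extracting $t_n$, $R_n$ by the mean value theorem) matches the paper and is fine once the differential inequality is correctly established.
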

By the Mean Value Theorem,
 the above lemma can be derived by the following proposition(choosing $T$ sufficiently large and $R=T^\frac{1}{b+1}$).
\begin{proposition}[Morawetz estimate]\label{prop:virmorest}
Let $T>0.$ For $R=R(\delta,M(u_0),Q_{a,b})$ sufficiently large, we have
\begin{equation}\label{equ:potensmal}
\frac1{T}\int_0^T\int_{|x|\leq \frac{R}4}|u(t,x)|^{p+1}\;dx\;dt\lesssim\frac{R}{T}+\frac1{R^b}.
\end{equation}
\end{proposition}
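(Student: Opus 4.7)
The plan is to establish \eqref{equ:potensmal} via a localized virial/Morawetz identity in the spirit of Dodson-Murphy \cite{DM1} and Campos-Cardoso \cite{CC}. Choose a smooth radial weight $a_R(x)=\int_0^{|x|}\phi_R(s)\,ds$ built from a non-decreasing cutoff $\phi_R$ with $\phi_R(r)=r$ on $[0,R/4]$, $\phi_R(r)=R$ on $[2R,\infty)$, and $\phi_R''\ge 0$, so that $a_R$ is convex. Then $\|\nabla a_R\|_{L^\infty}\lesssim R$, $(a_R)_{jk}\succeq 0$, $\Delta a_R\equiv 3$ and $\Delta^2 a_R\equiv 0$ on $\{|x|\le R/4\}$, and $\|\Delta^2 a_R\|_{L^\infty}\lesssim R^{-2}$ with support in the transition annulus $\{R/4\le|x|\le 2R\}$. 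Define
\[
M_R(t):=2\,\Im\int_{\R^3}\bar u(t,x)\,\nabla a_R(x)\cdot\nabla u(t,x)\,dx.
\]
Cauchy--Schwarz together with the uniform bound $\sup_t\|u(t)\|_{H^1_a}\lesssim 1$ (from mass/energy conservation in the defocusing case, and from \eqref{equ:assumpinit} with the sub-threshold variational estimate of \cite{CG} in the focusing case) gives $|M_R(t)|\lesssim R$ uniformly in $t$.

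Differentiating $M_R$ along \eqref{equ:nls} produces
\begin{align*}
\dot M_R(t)=4\!\int\!(a_R)_{jk}\Re(\partial_j u\,\overline{\partial_k u})\,dx&-\!\int\!\Delta^2 a_R\,|u|^2\,dx-2a\!\int\!\nabla a_R\cdot\nabla|x|^{-2}\,|u|^2\,dx\\
&+\lambda\!\int\!\Bigl[\tfrac{2(p-1)}{p+1}\Delta a_R\,|x|^{-b}+\tfrac{2}{p+1}\nabla a_R\cdot\nabla|x|^{-b}\Bigr]|u|^{p+1}\,dx.
\end{align*}
On $\{|x|\le R/4\}$: the Hessian term is $4\int|\nabla u|^2\ge 0$; the inverse-square cross term becomes $4a\int|u|^2/|x|^2$ and is absorbed by the Hessian via Hardy's inequality thanks to $a>-\tfrac14$; and the nonlinear bracket, using $\nabla a_R=x$, $\Delta a_R=3$, $\nabla a_R\cdot\nabla|x|^{-b}=-b|x|^{-b}$, combines to the coercive contribution $c_0\,\lambda\int_{|x|\le R/4}|u|^{p+1}\,dx$ with $c_0=c_0(a,b,p)>0$, where in the focusing case the correct sign is preserved by the sub-threshold assumption \eqref{equ:assumpinit}. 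On the transition annulus the biharmonic and nonlinear error terms are bounded using $|x|^{-b}\lesssim R^{-b}$, $\|\Delta a_R\|_\infty\lesssim 1$, $|\nabla|x|^{-b}|\lesssim R^{-b-1}$, and the uniform bounds $\|u\|_{L^2}^2+\|u\|_{L^{p+1}}^{p+1}\lesssim 1$, yielding a pointwise-in-$t$ error of size $O(R^{-b}+R^{-2})=O(R^{-b})$ thanks to $b<1$ and $R\ge 1$.

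Integrating the resulting lower bound $\dot M_R(t)\gtrsim\int_{|x|\le R/4}|u|^{p+1}\,dx-O(R^{-b})$ over $[0,T]$ and using $|M_R(T)-M_R(0)|\lesssim R$ gives
\[
\int_0^T\!\!\int_{|x|\le R/4}|u(t,x)|^{p+1}\,dx\,dt\lesssim R+\tfrac{T}{R^b},
\]
which upon dividing by $T$ is \eqref{equ:potensmal}.

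The main obstacle --- and the step most specific to the inhomogeneous potential setting --- is showing that the combination of $\tfrac{2(p-1)}{p+1}\Delta a_R\,|x|^{-b}$ and $\tfrac{2}{p+1}\nabla a_R\cdot\nabla|x|^{-b}$ produces a positive coercive \emph{unweighted} integrand $|u|^{p+1}$ on the inner ball (rather than the naturally-weighted $|x|^{-b}|u|^{p+1}$); this requires the precise algebraic identity $3(p-1)-b>0$ together with the scale matching $\|\nabla a_R\|_\infty\lesssim R$, and in the focusing case additionally relies on the coercivity supplied by \eqref{equ:assumpinit}. A second delicate ingredient is the sharp application of Hardy's inequality in the critical range $-\tfrac14<a<0$, used to absorb the negative inverse-square cross term into the kinetic term without destroying the coercivity of the Morawetz identity.
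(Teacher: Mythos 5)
Your overall architecture (localized convex virial weight, $|M_R(t)|\lesssim R$, fundamental theorem of calculus, annulus errors of size $O(R^{-b})$) is the same as the paper's, and your treatment of the exterior error terms and of the defocusing case is essentially fine. But there is a genuine gap in the focusing case, which is the only nontrivial case. You write the nonlinear contribution on the inner ball as $c_0\,\lambda\int_{|x|\le R/4}|u|^{p+1}\,dx$ and assert that ``the correct sign is preserved by the sub-threshold assumption.'' That is not an argument: for $\lambda=-1$ this term is \emph{negative}, and no algebraic identity among $p$, $b$, $\Delta a_R$ can change its sign. The assumption \eqref{equ:assumpinit} only yields positivity of the \emph{combination} $\|v\|_{\dot H^1_a}^2-\int |x|^{-b}|v|^4\,dx\ge c\int|x|^{-b}|v|^4\,dx$ (Proposition \ref{P:coercive}(ii)), so the localized kinetic term must be retained and paired with the nonlinear term. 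You instead discard the Hessian term as merely nonnegative (and simultaneously spend it to absorb the inverse-square cross term via Hardy), leaving nothing to compensate the negative nonlinearity. The paper's proof supplies exactly the missing step: introduce a cutoff $\chi_R$ equal to $1$ on $\{|x|\le R/4\}$ and supported in $\{|x|\le R/2\}$, use the identity $\int\chi_R^2|\nabla u|^2=\int|\nabla(\chi_R u)|^2+\int\chi_R\Delta(\chi_R)|u|^2$ to rewrite the localized kinetic energy as $\|\chi_R u\|_{\dot H^1_a}^2$ up to $O(R^{-2})$, check that $\chi_R u$ still satisfies the sub-threshold condition (its mass does not exceed that of $u$), and then apply the variational coercivity to $\chi_R u$. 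This is also where the dependence $R=R(\delta,M(u_0),Q_{a,b})$ in the statement comes from; your proposal never tracks it.

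A second, smaller issue: the bracket $\tfrac{2(p-1)}{p+1}\Delta a_R\,|x|^{-b}+\tfrac{2}{p+1}\nabla a_R\cdot\nabla|x|^{-b}$ evaluated on the inner ball equals a constant multiple of $|x|^{-b}$, so the quantity your identity actually controls is the \emph{weighted} integral $\int_{|x|\le R/4}|x|^{-b}|u|^{p+1}\,dx$, not the unweighted one you claim as ``coercive contribution.'' You correctly flag this weighted-versus-unweighted discrepancy as the main obstacle, but the resolution you offer (the inequality $3(p-1)-b>0$) does not remove the weight. In fact the weighted bound is the one the paper proves and the one required downstream in Lemma \ref{lem:poentialenergdec}; converting to the unweighted integrand on $\{|x|\le R/4\}$ costs a factor $R^{b}$ and would not give the right-hand side $\tfrac{R}{T}+\tfrac{1}{R^{b}}$. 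You should prove and use the weighted estimate directly.
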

We will use the virial/Morawetz argument as in \cite{CC,DM1} to prove this proposition in Section \ref{sec:vme}. Combining Lemma \ref{lem:scatcriterion} and Proposition \ref{prop:virmorest}, we conclude the proof of Theorem \ref{thm:main}.

The paper is organized as follows. In Section \ref{sec:pre},  as
preliminaries, we collect some useful lemmas, including some harmonic analysis tools from \cite{KMVZZ} related to the operator $\la$, Strichartz estimates and some nonlinear estimates, and the variational analysis for the sharp Gagliardo--Nirenberg inequality \eqref{equ:gnab}.   In Section \ref{sec:scatcrit}, we will
 prove Lemma  \ref{lem:scatcriterion} by following the argument as in \cite{CC,DM1,Tao}.
  We prove Proposition \ref{prop:virmorest} in Section \ref{sec:vme} based
on the virial/Morawetz argument. Finally, in Appendix, we will derive the interaction Morawetz estimate in the defocusing case.




\section{Preliminaries}\label{sec:pre}

We begin by introducing some notation.  To simplify the expression of our inequalities, we introduce
some symbols $\lesssim, \thicksim, \ll$. If $X, Y$ are nonnegative
quantities, we use $X\lesssim Y $ or $X=O(Y)$ to denote the estimate
$X\leq CY$ for some $C$, and $X \thicksim Y$ to denote the estimate
$X\lesssim Y\lesssim X$. We use $X\ll Y$ to mean $X \leq c Y$ for
some small constant $c$. We use $C\gg1$ to denote various large
finite constants, and $0< c \ll 1$ to denote various small
constants. For any $r, 1\leq r \leq \infty$, we denote by $\|\cdot
\|_{r}$ the norm in $L^{r}=L^{r}(\mathbb{R}^3)$ and by $r'$ the
conjugate exponent defined by $\frac{1}{r} + \frac{1}{r'}=1$. We
denote $a_{\pm}$ to be any quantity of the form $a\pm\epsilon$ for
any $\epsilon>0$.

\subsection{Harmonic analysis for $\la$}\label{subsec:harmonic}

For $1< r < \infty$, we write $\dot H^{1,r}_a(\R^3)$ and $
H^{1,r}_a(\R^3)$ for the homogeneous and inhomogeneous Sobolev
spaces associated with $\la$, respectively, which have norms
$$
\|f\|_{\dot H^{1,r}_a(\R^3)}= \|\sqrt{\la} f\|_{L^r(\R^3)} \quad\text{and}\;
\|f\|_{H^{1,r}_a(\R^3)}= \|\sqrt{1+ \la} f\|_{L^r(\R^3)}.
$$
When $r=2$, we simply write $\dot H^{1}_a(\R^3)=\dot
H^{1,2}_a(\R^3)$ and $H^{1}_a(\R^3)=H^{1,2}_a(\R^3)$.

Now, we recall a key proposition which states that Sobolev spaces defined through powers of $\la$ coincide with the traditional Sobolev spaces.

\begin{proposition}[Equivalence of Sobolev spaces, \cite{KMVZZ}]\label{pro:equivsobolev} Let $a\geq -\tfrac14,\;\sigma=\tfrac12-\sqrt{\tfrac14+a}$, and $0<s<2$. If $1<p<\infty$ satisfies $\frac{s+\sigma}{3}<\frac{1}{p}< \min\{1,\frac{3-\sigma}{3}\}$, then
\[
\||\nabla|^s f \|_{L_x^p}\lesssim_{p,s} \|(\la)^{\frac{s}{2}} f\|_{L_x^p}\quad\text{for all}\;f\in C_c^\infty(\R^3\backslash\{0\}).
\]
If $\max\{\frac{s}{3},\frac{\sigma}{3}\}<\frac{1}{p}<\min\{1,\frac{3-\sigma}{3}\}$, then
\[
\|(\la)^{\frac{s}{2}} f\|_{L_x^p}\lesssim_{p,s} \||\nabla|^s f\|_{L_x^p} \quad\text{for all}\; f\in C_c^\infty(\R^3\backslash\{0\}).
\]
\end{proposition}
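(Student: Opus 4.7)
The plan is to reduce each inequality to the $L^p$-boundedness of a singular-integral-type operator and then to analyze that operator via heat-kernel bounds. Setting $g = (\la)^{s/2} f$ in the first inequality and $h = |\nabla|^s f$ in the second, the two claims become
\[
\bigl\|\,|\nabla|^s (\la)^{-s/2} g\bigr\|_{L^p} \lesssim \|g\|_{L^p}, \qquad \bigl\|(\la)^{s/2} |\nabla|^{-s} h\bigr\|_{L^p} \lesssim \|h\|_{L^p}.
\]
Both operators are unpacked via the Gamma/subordination identities
\[
(\la)^{-s/2} = \frac{1}{\Gamma(s/2)}\int_0^\infty t^{s/2-1} e^{-t\la}\, dt, \quad (\la)^{s/2} = \frac{1}{|\Gamma(-s/2)|}\int_0^\infty t^{-s/2-1}(I - e^{-t\la})\, dt,
\]
valid for $0 < s < 2$, together with their counterparts for $(-\Delta)^{\pm s/2}$. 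This reduces matters to a comparison of the semigroups $e^{-t\la}$ and $e^{t\Delta}$ at the kernel level.

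The key analytic input is the sharp two-sided Gaussian bound for the heat kernel of $\la$ when $a > -1/4$, namely
\[
e^{-t\la}(x,y) \asymp t^{-3/2}\Bigl(1\wedge \tfrac{\sqrt t}{|x|}\Bigr)^{\!\sigma}\Bigl(1\wedge \tfrac{\sqrt t}{|y|}\Bigr)^{\!\sigma} e^{-c|x-y|^2/t},
\]
due to Milman--Semenov and Liskevich--Sobol. Inserting this into the Gamma-function representations and comparing against the Gauss kernel for $e^{t\Delta}$, one controls the integral kernels of $|\nabla|^s (\la)^{-s/2}$ and $(\la)^{s/2} |\nabla|^{-s}$ pointwise, modulated by weights of the form $|x|^{-\sigma}$ and $|y|^{-\sigma}$. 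A clean way to assemble the $L^p$ estimates is the vertical Littlewood--Paley square function associated with each semigroup: the heat kernel comparison yields a pointwise comparison of the two square functions, which by the Fefferman--Stein theory is equivalent to the desired $L^p$ estimates.

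The main obstacle is isolating the sharp range of $p$. Near the origin, $(\la)^{-s/2} g$ generically behaves like $|x|^{-\sigma}$ times a Riesz-type object; for $L^p$ membership to survive one needs $|x|^{-\sigma} \in L^p_{\mathrm{loc}}$ at the origin, which produces the lower bound $\tfrac{\sigma}{3} < \tfrac{1}{p}$, and dually the upper bound $\tfrac{1}{p} < \tfrac{3-\sigma}{3}$. The strengthened condition $\tfrac{s+\sigma}{3} < \tfrac{1}{p}$ for the first inequality reflects the additional regularity cost of differentiating a function with a $|x|^{-\sigma}$ singularity $s$ times, essentially a Hardy-type threshold; this asymmetry explains why only the second inequality survives with the weaker constraint $\max\{\tfrac{s}{3},\tfrac{\sigma}{3}\}<\tfrac{1}{p}$. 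Once the kernel estimates are secured in, say, $L^2$ (where Hardy's inequality and the spectral theorem give the equivalence trivially), real interpolation and duality close the argument on the full admissible range.
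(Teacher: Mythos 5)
The paper itself offers no proof of this proposition: it is imported verbatim from the cited reference [KMVZZ] (Killip--Miao--Visan--Zhang--Zheng), so there is no in-paper argument to compare against. Your outline does follow the broad strategy of that reference --- reduce to $L^p$-boundedness of $|\nabla|^s\la^{-s/2}$ and $\la^{s/2}|\nabla|^{-s}$, feed heat-kernel bounds into subordination formulas, and run Littlewood--Paley/square-function machinery --- but as written it contains genuine gaps.

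First, the key analytic input is stated backwards: the Liskevich--Sobol/Milman--Semenov bound reads $e^{-t\la}(x,y)\asymp t^{-3/2}\bigl(1\vee \tfrac{\sqrt t}{|x|}\bigr)^{\sigma}\bigl(1\vee\tfrac{\sqrt t}{|y|}\bigr)^{\sigma}e^{-|x-y|^2/(ct)}$ with a maximum, not a minimum (and with different Gaussian constants in the upper and lower bounds, so a single ``$\asymp$'' with one $c$ is already imprecise). With your version the kernel has no singularity at $x=0$ when $\sigma>0$, so the very mechanism you later invoke to explain the thresholds $\tfrac{\sigma}{3}<\tfrac1p<\tfrac{3-\sigma}{3}$ is absent from your stated estimate. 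Second, a pointwise two-sided bound on the positive kernel $e^{-t\la}(x,y)$ does not yield a pointwise comparison of the two vertical square functions: the square function is built from $t\la e^{-t\la}$ (or from differences $I-e^{-t\la}$), whose kernels are signed, and an upper bound on a positive kernel controls neither cancellation nor the resulting square function pointwise. This is precisely why the actual proof in [KMVZZ] proceeds instead through a Mikhlin-type multiplier theorem for $\la$ and estimates on the cross terms between the Littlewood--Paley projections adapted to $\la$ and to $-\Delta$, extracting summable gains in the ratio of the two frequencies. Third, ``secure the estimate in $L^2$ and close by real interpolation and duality'' cannot reach the full open range of exponents: the sharp endpoints $\tfrac{s+\sigma}{3}$ and $\tfrac{3-\sigma}{3}$ are not obtainable by interpolating the (essentially trivial) $L^2$ statement against anything you have established; the boundedness must be proved directly on each dyadic piece from the weighted kernel bounds and then summed. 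Your heuristic for the asymmetry of the two admissible ranges (the extra $s$ in $\tfrac{s+\sigma}{3}$ as the cost of differentiating across the $|x|^{-\sigma}$ singularity) is correct and matches the source, but the argument as proposed does not close.
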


We will also  use  the following fractional calculus estimates due to Christ and Weinstein \cite{CW}.  Combining these estimates with Lemma~\ref{pro:equivsobolev}, we can deduce analogous statements for the operator $\la$ (for restricted sets of exponents).
\begin{lemma}[Fractional calculus]\label{lem:frac}
\begin{itemize}
\item[(i)] Let $s\geq 0$ and $1<r,r_j,q_j<\infty$ satisfy $\tfrac{1}{r}=\tfrac{1}{r_j}+\tfrac{1}{q_j}$ for $j=1,2$. Then
\[
\big\||\nabla|^s(fg) \big\|_{L_x^r} \lesssim \|f\|_{L_x^{r_1}} \||\nabla|^s g\|_{L_x^{q_1}} + \| |\nabla|^s f\|_{L_x^{r_2}} \| g\|_{L_x^{q_2}}.
\]
\item[(ii)] Let $G\in C^1(\C)$ and $s\in (0,1]$, and let $1<r_1\leq \infty$  and $1<r,r_2<\infty$ satisfy $\tfrac{1}{r}=\tfrac{1}{r_1}+\tfrac{1}{r_2}$. Then
\[
\big\||\nabla|^s G(u)\big\|_{L_x^r} \lesssim \|G'(u)\|_{L_x^{r_1}} \|u\|_{L_x^{r_2}}.
\]
\end{itemize}
\end{lemma}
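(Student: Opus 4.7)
The plan is to prove both estimates by Littlewood--Paley/paraproduct techniques, in the spirit of the original Christ--Weinstein argument.

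For (i), I would decompose $fg$ via Bony's paraproduct into three pieces,
\[
fg \;=\; \sum_j S_{j-3}f\cdot \Delta_j g \;+\; \sum_j \Delta_j f\cdot S_{j-3}g \;+\; \sum_{|j-k|\leq 2}\Delta_j f\cdot \Delta_k g,
\]
call them $\Pi_{LH}$, $\Pi_{HL}$, and $\Pi_{HH}$. Because each summand of $\Pi_{LH}$ has frequencies essentially in the annulus $\{|\xi|\sim 2^j\}$, the operator $|\nabla|^s$ acts on it like multiplication by $2^{js}$ up to a Mihlin-type multiplier bounded on $L^r$. Hence
\[
\bigl\||\nabla|^s\Pi_{LH}(f,g)\bigr\|_{L^r} \;\lesssim\; \Bigl\|\Bigl(\sum_j |S_{j-3}f|^2\,|2^{js}\Delta_j g|^2\Bigr)^{1/2}\Bigr\|_{L^r},
\]
and the Fefferman--Stein vector-valued maximal inequality together with the square-function characterization of $L^r$ bounds this by $\|f\|_{L^{r_1}}\||\nabla|^s g\|_{L^{q_1}}$. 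The piece $\Pi_{HL}$ produces the symmetric term, and $\Pi_{HH}$ is handled by placing $|\nabla|^s$ on whichever factor is most convenient and summing in the near-diagonal index using Bernstein.

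For (ii), with $s\in(0,1]$, I would rely on the singular-integral representation
\[
|\nabla|^s h(x) \;=\; c_s\,\mathrm{p.v.}\!\int_{\R^3}\frac{h(x)-h(y)}{|x-y|^{3+s}}\,dy,
\]
applied to $h=G(u)$, combined with the mean value inequality
\[
|G(u(x))-G(u(y))| \;\leq\; |u(x)-u(y)|\int_0^1\bigl|G'\bigl((1-\tau)u(y)+\tau u(x)\bigr)\bigr|\,d\tau.
\]
Decomposing the $y$-integral dyadically into annuli $|x-y|\sim 2^{-k}$ and controlling the averages of $|G'(u)|$ on each annulus by the Hardy--Littlewood maximal function $M(G'(u))(x)$, one obtains a pointwise bound of the schematic form
\[
|\nabla|^s G(u)(x) \;\lesssim\; M(G'(u))(x)\cdot \mathcal{D}_s u(x),
\]
where $\mathcal{D}_s u$ is a fractional-difference expression whose $L^{r_2}$-norm is comparable to $\||\nabla|^s u\|_{L^{r_2}}$. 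H\"older with exponents $(r_1,r_2)$ and the $L^p$-boundedness of $M$ for $1<p\leq\infty$ then close the estimate.

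The main technical obstacle lies in part (i), specifically in the high-high piece $\Pi_{HH}$: here the convenient feature of paraproducts, namely that $|\nabla|^s$ acts like $2^{js}$ on each summand, is lost because the frequencies of $\Delta_j f\cdot\Delta_k g$ can cancel down to arbitrarily low scales. Resolving this requires either a further frequency localization of the product or a choice to distribute $|\nabla|^s$ entirely onto one factor, paid for by a Bernstein shift on the other, and keeping track of the dyadic summability in the regime where one of the $L^{q_j}$-indices degenerates. In part (ii), the analogous difficulty is that the $\tau$-integrated $G'$ term couples $x$ and $y$ nonlinearly, so the pointwise dominance by $M(G'(u))(x)$ is where all the work sits; once it is in place the rest of the argument is routine.
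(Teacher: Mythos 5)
The paper does not prove this lemma at all: it is quoted verbatim as a known result of Christ and Weinstein \cite{CW} (see also Kato--Ponce and Taylor's treatment), so there is no in-paper argument to compare against. Your paraproduct sketch for part (i) is the standard route and is essentially sound: the low-high and high-low pieces go exactly as you say, and the high-high piece is handled, for $s>0$, by splitting the output of $\Delta_j f\,\Delta_k g$ ($|j-k|\le 2$) into frequency shells $2^m$ with $m\le j+O(1)$ and using $\sum_{m\le j}2^{ms}\lesssim 2^{js}$ before transferring $2^{js}$ to one factor; you correctly identify this as the place where work is needed, though you do not carry it out, and the case $s=0$ is just H\"older.

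Part (ii) has two genuine gaps. First, the representation $|\nabla|^s h(x)=c_s\,\mathrm{p.v.}\int (h(x)-h(y))|x-y|^{-3-s}\,dy$ combined with the mean value inequality gives an \emph{absolutely} divergent integral at the endpoint $s=1$: once you replace $G(u(x))-G(u(y))$ by $|u(x)-u(y)|\cdot(\cdots)$ you have discarded the cancellation that the principal value needs, and $\int |u(x)-u(y)|\,|x-y|^{-4}\,dy$ does not converge for generic $u\in \dot H^{1,r_2}$. The statement includes $s=1$, so you need either the second-difference kernel or, as in \cite{CW}, a Littlewood--Paley telescoping $G(u)=\sum_j\bigl(G(S_{j+1}u)-G(S_ju)\bigr)$, which handles all $s\in(0,1]$ uniformly. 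Second, the pointwise bound $\int_0^1|G'((1-\tau)u(y)+\tau u(x))|\,d\tau\lesssim M(G'(u))(x)$ is false for general $G\in C^1(\C)$: the argument of $G'$ is a point on the segment joining $u(x)$ and $u(y)$ in $\C$, which need not be a value attained by $u$, so it cannot be controlled by averages of $G'(u)$. One needs a structural hypothesis such as $|G'(z)|\lesssim|z|^{p-1}$ with $G'$ H\"older continuous, which gives $|G'((1-\tau)u(y)+\tau u(x))|\lesssim |G'(u(x))|+|G'(u(y))|$; this holds for the cubic nonlinearity $G(u)=|u|^2u$ actually used in the paper, but your proof as written claims more than it delivers. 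Finally, note that you have silently (and correctly) proved the estimate with $\||\nabla|^s u\|_{L^{r_2}}$ on the right-hand side; the paper's display, which has $\|u\|_{L^{r_2}}$, is a typo, as one sees by taking $G(z)=z$.
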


 \subsection{Dispersive and Strichartz estimates}\label{sze}
In \cite{FFFP1}, Fanelli,  Felli, Fontelos, and  Primo showed the following dispersive estimate for the Schr\"odinger operator $e^{it\la}$.
\begin{lemma}[Dispersive estimate, \cite{FFFP1}]\label{thm:disp}
$(i)$ If $a\geq0$, then
\begin{equation}\label{equ:dispapos}
\|e^{it\la}f\|_{L^\infty(\R^3)}\leq C|t|^{-\frac{3}2}\|f\|_{L_x^1(\R^3)}.
\end{equation}

$(ii)$ If $-\tfrac14<a<0$, then there holds
\begin{equation}\label{equ:dispaneg}
\big\|(1+|x|^{-\sigma})^{-1}e^{it\la}f\big\|_{L^\infty(\R^3)}\leq C\frac{1+|t|^{\sigma}}{|t|^{\frac{3}2}}\big\|(1+|x|^{-\sigma})f\big\|_{L_x^1(\R^3)},
\end{equation}
with $\sigma=\tfrac12-\sqrt{\tfrac14+a}$ being as in Proposition \ref{pro:equivsobolev}.
\end{lemma}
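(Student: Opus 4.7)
The plan is to decompose in spherical harmonics and exploit the explicit kernel of the one-dimensional radial propagators. Writing $f(x)=\sum_{\ell\geq 0}\sum_{k}f_{\ell,k}(r)Y_{\ell,k}(\omega)$ with $r=|x|$ and $\omega=x/|x|$, the operator $\la$ is block-diagonal, and on the $\ell$-th angular momentum sector it acts (after the standard substitution $g(r)=rf(r)$) as the Bessel operator of order
\[
\nu_\ell:=\sqrt{(\ell+\tfrac12)^2+a}
\]
on the half-line. Its Schr\"odinger propagator has the classical Hankel-transform kernel
\[
K_\ell(t;r,\rho)=\frac{(r\rho)^{-1/2}}{2it}\,e^{i(r^2+\rho^2)/(4t)}J_{\nu_\ell}\!\left(\frac{r\rho}{2t}\right),
\]
and reassembling the angular series via the Gegenbauer/addition formula yields a closed-form expression for the full kernel $K(t,x,y)$ of $e^{it\la}$.

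For part $(i)$, with $a\geq 0$ we have $\nu_\ell\geq \ell+\tfrac12$ for every $\ell$. Using the uniform Bessel bounds $|s^{-\nu}J_\nu(s)|\lesssim 1$ in the small-argument regime and $|J_\nu(s)|\lesssim s^{-1/2}$ in the large-argument regime (with constants independent of $\nu\geq 1/2$, due to Barcelo--C\'ordoba / Stein--Wainger), and summing the resulting angular series by exploiting concentration near $\omega\cdot\omega'=1$, one obtains the pointwise bound $|K(t,x,y)|\lesssim |t|^{-3/2}$ uniformly in $x,y\in\R^3$, which yields \eqref{equ:dispapos} by Schur's test. An attractive alternative is to use positivity of the potential to pointwise-dominate the heat kernel $e^{-t\la}(x,y)$ by the free heat kernel, then analytically continue in the time parameter.

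For part $(ii)$, with $-\tfrac14<a<0$, the lowest-order index is $\nu_0=\tfrac12-\sigma\in(0,\tfrac12)$. Since $J_{\nu_0}(s)\sim s^{\nu_0}$ near zero, the kernel $K_0(t;r,\rho)$ is no longer uniformly bounded by $|t|^{-3/2}$: in the regime $r\rho/|t|\ll 1$ the factor $(r\rho/|t|)^{\nu_0-1/2}=(r\rho/|t|)^{-\sigma}$ appears. I would split the kernel into the two regions $r\rho/|t|\gtrsim 1$ and $r\rho/|t|\lesssim 1$. The large-argument piece is bounded uniformly by $|t|^{-3/2}$ as in case $(i)$, while in the small-argument piece the singular factor is absorbed into the weight $(1+|x|^{-\sigma})(1+|y|^{-\sigma})$, at the cost of the loss $(1+|t|^{\sigma})$ appearing in \eqref{equ:dispaneg}. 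Higher $\ell$-sectors are harmless because $\nu_\ell\geq\tfrac12$ as soon as $\ell\geq 1$ (for $a>-\tfrac14$), so they are controlled exactly as in part $(i)$.

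The main obstacle is controlling the spherical-harmonic sum uniformly in $\ell$ with constants independent of $a$, so that the angular series can be closed and pointwise bounds on $K(t,x,y)$ obtained; this requires the uniform-in-order Bessel estimates. For negative $a$, there is the additional delicate book-keeping needed to isolate the non-uniform behavior of $J_{\nu_0}$ at the origin and express the residual singularity precisely as the weight $(1+|x|^{-\sigma})$ claimed in the statement.
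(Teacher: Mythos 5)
The paper itself does not prove this lemma: it is imported verbatim from \cite{FFFP1}, so your sketch has to stand on its own. The route you choose (spherical harmonics plus the Hankel/Bessel representation of each radial propagator) is a legitimate classical starting point, and your identification of the $\ell=0$ sector, with $\nu_0=\sqrt{\tfrac14+a}=\tfrac12-\sigma$, as the source of both the weight $(1+|x|^{-\sigma})$ and the loss $|t|^{\sigma}$ in \eqref{equ:dispaneg} is correct. The problem is that the decisive step --- resumming $\sum_{\ell}(2\ell+1)P_\ell(\omega\cdot\omega')K_\ell(t;r,\rho)$ into a kernel bounded pointwise by $|t|^{-3/2}$ --- is exactly the hard part of the theorem, and you only name it as ``the main obstacle'' instead of resolving it. Term-by-term absolute estimates $|J_{\nu_\ell}(s)|\lesssim\min(s^{\nu_\ell},s^{-1/2})$ cannot close the argument, since $\sum_\ell(2\ell+1)s^{-1/2}$ diverges; when $a=0$ the series collapses only because $\nu_\ell=\ell+\tfrac12$ are exact half-integers and the addition theorem reconstitutes the free kernel, whereas for $a\neq0$ the orders $\sqrt{(\ell+\tfrac12)^2+a}$ are irrational perturbations and one must quantify cancellation in $\ell$ (for instance by expanding $J_{\nu_\ell}$ around $J_{\ell+1/2}$ in the order and summing the error series). ``Exploiting concentration near $\omega\cdot\omega'=1$'' is not a substitute for this analysis, so as written the proposal does not constitute a proof of either part.

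Your ``attractive alternative'' for part $(i)$ is not valid. For $a\geq0$ one does have the pointwise domination $0\leq e^{-t\la}(x,y)\leq e^{t\Delta}(x,y)$ for $t>0$, but this is an inequality between kernels and inequalities cannot be analytically continued to $t\in i\R$; passing from Gaussian heat-kernel bounds to Schr\"odinger dispersive bounds is a well-known and in general unsolved problem, which is precisely why \cite{FFFP1} is a nontrivial reference. For comparison, the actual proof in \cite{FFFP1} takes a different route: a pseudoconformal (Mehler/Appell-type) representation of $e^{it\la}$ converts the dispersive estimate into uniform weighted $L^\infty$ bounds on the eigenfunctions of the associated harmonic-oscillator-type operator $\la+\tfrac{|x|^2}{16}$, and the weight $|x|^{-\sigma}$ in \eqref{equ:dispaneg} emerges as the exact rate at which those eigenfunctions blow up at the origin when $-\tfrac14<a<0$. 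If you want to pursue your Bessel-series route instead, the uniform-in-$\ell$ resummation must be carried out in detail; it is the entire content of the estimate.
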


Strichartz estimates for the propagator $e^{it\la}$  were proved by Burq, Planchon, Stalker, and Tahvildar-Zadeh in \cite{BPSS}. Zhang-Zheng \cite{ZZ17} confirmed the
double endpoint case. Combining these with the Christ--Kiselev Lemma \cite{CK}, we obtain the following Strichartz estimates:

\begin{proposition}[Strichartz, \cite{BPSS,ZZ17}] Fix $a>-\frac14$. The solution $u$ to $(i\partial_t+\la)u = F$ on an interval $I\ni t_0$ obeys
\[
\|u\|_{L_t^q L_x^r(I\times\R^3)} \lesssim \|u(t_0)\|_{L_x^2(\R^3)} + \|F\|_{L_t^{\tilde q'} L_x^{\tilde r'}(I\times\R^3)}
\]
for any $2\leq q,\tilde q\leq\infty$ with $\frac{2}{q}+\frac{3}{r}=\frac{2}{\tilde q}+\frac{3}{\tilde r}= \frac32$.
\end{proposition}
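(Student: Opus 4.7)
The plan is to reduce the full inhomogeneous Strichartz inequality to the homogeneous bound $\|e^{it\la}f\|_{L_t^qL_x^r(\R\times\R^3)}\lesssim\|f\|_{L^2}$ for every admissible pair, and then use duality together with the Christ--Kiselev lemma to restore the general off-diagonal form. This is the standard pipeline; the content lies in establishing the homogeneous estimate for the perturbed operator $\la$.

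For the non-endpoint range $q>2$, I would execute the $TT^*$ argument of Keel--Tao. When $a\ge 0$, Lemma~\ref{thm:disp}(i) supplies the clean $L^1\!\to\!L^\infty$ decay at rate $|t|^{-3/2}$; interpolating this with the $L^2$-unitarity of $e^{it\la}$ yields $\|e^{i(t-s)\la}\|_{L^{r'}\to L^r}\lesssim|t-s|^{-3(1/2-1/r)}$, after which a Hardy--Littlewood--Sobolev fractional integration in time closes the estimate because $3(1/2-1/r)=2/q<1$. When $-\tfrac14<a<0$, only the weighted bound of Lemma~\ref{thm:disp}(ii) is directly available, so I would follow the perturbative Hankel-transform framework of Burq--Planchon--Stalker--Tahvildar-Zadeh: decompose into spherical harmonics so that $\la$ becomes a family of one-dimensional Bessel operators, control each angular kernel by oscillatory integral estimates, and reassemble the pieces uniformly in the angular parameter. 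The true endpoint $(q,r)=(2,6)$ lies outside the reach of naive $TT^*$; here I would quote Zhang--Zheng directly, whose bilinear interpolation on dyadic frequency blocks (modeled on Keel--Tao) pushes the inequality to the double endpoint for any $a>-\tfrac14$.

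With the homogeneous estimate in hand, duality yields the diagonal inhomogeneous bound with $(\tilde q,\tilde r)=(q,r)$, and the Christ--Kiselev lemma upgrades this to all admissible off-diagonal pairs provided one is not simultaneously at both endpoints; the remaining double-endpoint off-diagonal case $\tilde q=q=2$ is again furnished by Zhang--Zheng. The main obstacle throughout is the negative-$a$ regime, where the dispersive estimate is genuinely too weak to plug into Keel--Tao directly: the spatial weights $(1+|x|^{-\sigma})$ must be traded for Sobolev control via Proposition~\ref{pro:equivsobolev} and the Hardy-type inequalities that are valid precisely because $a>-\tfrac14$. I would not reprove this technical core of the cited papers but rather invoke it as a black box and focus attention on the adaptation to the present inhomogeneous nonlinearity.
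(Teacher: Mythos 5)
Your proposal is correct and follows essentially the same route as the paper, which offers no proof of its own but simply cites Burq--Planchon--Stalker--Tahvildar-Zadeh for the Strichartz estimates, Zhang--Zheng for the double endpoint, and the Christ--Kiselev lemma for the off-diagonal inhomogeneous pairs. Your additional sketch of the $TT^*$/Keel--Tao machinery and of the Hankel-transform treatment of the negative-$a$ regime is an accurate account of what those cited works actually do, so invoking them as a black box, as you propose, is exactly what the paper does.
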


\subsection{Variational analysis}\label{subsec:variana}
In this subsection,
 we recall the variational analysis for the sharp Gagliardo--Nirenberg inequality \eqref{equ:gnab} in \cite{CG}.

\begin{proposition}[Coercivity, \cite{CG}]\label{P:coercive} Fix $a>-\tfrac14$. Let $u:I\times\R^3\to\C$
be the maximal-lifespan solution to \eqref{equ:nls} with $u(t_0)=u_0\in H_a^1\backslash\{0\}$ for some $t_0\in I$. Assume that
\begin{equation}\label{equ:assumpu01}
M(u_0)E_a(u_0) \leq (1-\delta)M(Q_{a,b})E_a(Q_{a,b})\quad\text{for some}\quad\delta>0.
\end{equation}
Then there exist $\delta'=\delta'(\delta)>0$, $c=c(\delta,a,\|u_0\|_{L_x^2})>0$, and $\epsilon=\epsilon(\delta)>0$ such that:
If $\|u_0\|_{L_x^2}\| u_0\|_{\dot H_a^1} \leq \|Q_{a,b}\|_{L_x^2} \|Q_{a,b}\|_{\dot H_a^1}$, then for all $t\in I$,
\begin{itemize}
\item[(i)] $\|u(t)\|_{L_x^2} \|u(t)\|_{\dot H_a^1} \leq (1-\delta')\|Q_{a,b}\|_{L_x^2} \|Q_{a,b}\|_{\dot H_a^1}$,
\item[(ii)] $\|u(t)\|_{\dot H_a^1}^2 -  \int_{\R^3}\frac{|u|^4}{|x|^b}\;dx \geq c\int_{\R^3}\frac{|u|^4}{|x|^b}\;dx.$
\end{itemize}

\end{proposition}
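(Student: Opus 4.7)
The plan is to combine the sharp Gagliardo--Nirenberg inequality \eqref{equ:gnab} with the conservation of mass and energy, using Pohozaev/Nehari identities to identify $M(Q_{a,b})E_a(Q_{a,b})$ and the sharp constant $C_a$ in closed form. First I will extract the identities for $Q_{a,b}$ from \eqref{equ:qaelliptic}: multiplying by $Q_{a,b}$ produces the Nehari relation $\|Q_{a,b}\|_{\dot H^1_a}^2 + \|Q_{a,b}\|_{L^2}^2 = W(Q_{a,b})$, where I write $W(u):=\int_{\R^3}|u|^4|x|^{-b}\,dx$, and multiplying by $x\cdot\nabla Q_{a,b}$ yields the Pohozaev identity $\|Q_{a,b}\|_{\dot H^1_a}^2 = \tfrac{3+b}{4}\,W(Q_{a,b})$ (the scale-homogeneity $x\cdot\nabla(a/|x|^2)=-2a/|x|^2$ folds the inverse-square term cleanly into the $\dot H^1_a$-norm). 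Together with equality in \eqref{equ:gnab} at $Q_{a,b}$, these pin down the useful normalization $C_a(\|Q_{a,b}\|_{L^2}\|Q_{a,b}\|_{\dot H^1_a})^{1-b}=\tfrac{4}{3+b}$ and give $M(Q_{a,b})E_a(Q_{a,b})$ explicitly in terms of $W(Q_{a,b})$.

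Next, I would introduce the scalar $y(t):=\|u(t)\|_{L^2}\|u(t)\|_{\dot H^1_a}$. Multiplying the conserved energy by the conserved mass and applying \eqref{equ:gnab} gives
\[
M(u_0)E_a(u_0) \;=\; M(u(t))E_a(u(t)) \;\geq\; \tfrac12\, y(t)^2 - \tfrac{C_a}{4}\,y(t)^{3-b} \;=:\; f(y(t)),
\]
and a direct computation using the ratios from Step~1 shows $f(y_Q)=M(Q_{a,b})E_a(Q_{a,b})$ with $y_Q:=\|Q_{a,b}\|_{L^2}\|Q_{a,b}\|_{\dot H^1_a}$, and that $f$ is strictly increasing on $[0,y_Q]$. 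The hypothesis \eqref{equ:assumpu01} then reads $f(y(t))\leq(1-\delta)f(y_Q)$, and combined with $\|u_0\|_{L^2}\|u_0\|_{\dot H^1_a}\leq\|Q_{a,b}\|_{L^2}\|Q_{a,b}\|_{\dot H^1_a}$ this confines $y(t)$ to $[0,y_-]$, where $y_-<y_Q$ is the unique solution of $f(y_-)=(1-\delta)f(y_Q)$ in $[0,y_Q]$. A continuity-in-$t$ argument, using that $y(\cdot)$ is continuous because $u\in C(I;H^1_a)$ and that crossing the barrier $y_Q$ would contradict $f(y(t))<f(y_Q)$, promotes this to the uniform bound $y(t)\leq(1-\delta')y_Q$ with $\delta':=1-y_-/y_Q>0$, which is exactly (i).

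For (ii), I would feed (i) back into \eqref{equ:gnab} once more,
\[
W(u(t)) \;\leq\; C_a\,y(t)^{1-b}\,\|u(t)\|_{\dot H^1_a}^2 \;\leq\; (1-\delta')^{1-b}\,\tfrac{4}{3+b}\,\|u(t)\|_{\dot H^1_a}^2,
\]
and rearrange to produce the Pohozaev-type coercivity $\|u(t)\|_{\dot H^1_a}^2 - \tfrac{3+b}{4}W(u(t)) \geq [1-(1-\delta')^{1-b}]\,\|u(t)\|_{\dot H^1_a}^2 > 0$; using the same display to bound $\|u(t)\|_{\dot H^1_a}^2$ from below by a multiple of $W(u(t))$ converts this into the claimed coercive lower bound for the difference in (ii), absorbing the explicit numerical constants into $c=c(\delta,a,\|u_0\|_{L^2})$. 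The main delicate step is the continuity-in-$t$ upgrade from the pointwise inequality $f(y(t))\leq(1-\delta)f(y_Q)$ to the uniform gap, since one must rule out $y(t)$ drifting into the other branch $[y_+,\infty)$ compatible with the energy constraint; this is precisely where the hypothesis $y(t_0)\leq y_Q$ together with the global $H^1_a$-regularity of the flow enters.
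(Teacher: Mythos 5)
The paper offers no proof of this proposition---it is imported verbatim from \cite{CG}---so the only question is whether your argument stands on its own. Your strategy (sharp Gagliardo--Nirenberg plus Nehari/Pohozaev identities, the scalar $y(t)=\|u(t)\|_{L^2}\|u(t)\|_{\dot H^1_a}$, and a continuity-in-$t$ argument trapping $y(t)$ below the first critical point of $f$) is exactly the standard route, and part (i) would go through along these lines. But two concrete problems remain. First, your closed-form constants rest on \eqref{equ:gnab} as printed, with exponent $3-b$ on $\|\sqrt{\la}f\|_{L^2}$; that exponent violates scaling (test $f\mapsto f(\mu\,\cdot)$: the two sides scale as $\mu^{b-3}$ and $\mu^{-(3-2b)}$), and the correct power is $3+b=\tfrac{d(p-1)+2b}{2}$. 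With the correct exponent one has $M(u)\,W(u)\le C_a\|u\|_{L^2}^{3-b}\|u\|_{\dot H^1_a}^{3+b}=C_a\|u_0\|_{L^2}^{-2b}\,y^{3+b}$ (your $W(u)=\int|x|^{-b}|u|^4\,dx$), which is \emph{not} a function of $y$ alone; your normalization $C_a y_Q^{1-b}=\tfrac{4}{3+b}$ picks up a factor $\|Q_{a,b}\|_{\dot H^1_a}^{-2b}$, and $f(y_Q)=M(Q_{a,b})E_a(Q_{a,b})$ holds only when $\|u_0\|_{L^2}=\|Q_{a,b}\|_{L^2}$. This is repairable because mass is conserved (it is precisely why $c$ depends on $\|u_0\|_{L^2}$), but every identity in your first two steps must be redone with the mass-dependent $f$.

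Second, and more seriously, your derivation of (ii) does not close. What your computation actually produces is coercivity of the Pohozaev-weighted quantity $\|u\|_{\dot H^1_a}^2-\tfrac{3+b}{4}W(u)\ge \kappa(\delta')\,\|u\|_{\dot H^1_a}^2$; the final sentence, which converts this into $\|u\|_{\dot H^1_a}^2-W(u)\ge c\,W(u)$, would require $(1-\delta')^{1-b}<\tfrac{3+b}{4}$, and this fails for small $\delta'$ because $\tfrac{3+b}{4}<1$. In fact the unweighted inequality in (ii) cannot be proved: taking $u_0=\mu^{3/2}Q_{a,b}(\mu\,\cdot)$ with $\mu$ slightly below $1$, one checks $M(u_0)E_a(u_0)<M(Q_{a,b})E_a(Q_{a,b})$ and $\|u_0\|_{L^2}\|u_0\|_{\dot H^1_a}<\|Q_{a,b}\|_{L^2}\|Q_{a,b}\|_{\dot H^1_a}$, while
\[
\|u_0\|_{\dot H^1_a}^2-W(u_0)=\mu^{2}W(Q_{a,b})\bigl(\tfrac{3+b}{4}-\mu^{1+b}\bigr)<0 .
\]
So (ii) as printed is a misstatement of the result in \cite{CG}: the quantity that vanishes at the ground state, and the one the virial identity \eqref{equ:virialdef} with $w=|x|^2$ actually generates (the correct coefficient there is $2(3+b)\lambda$, not $8\lambda$), is $\|u\|_{\dot H^1_a}^2-\tfrac{3+b}{4}\int|x|^{-b}|u|^4\,dx$. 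The statement you can and should prove is $\|u\|_{\dot H^1_a}^2-\tfrac{3+b}{4}\int|x|^{-b}|u|^4\,dx\ge c\int|x|^{-b}|u|^4\,dx$, which your Step 3 delivers once the constants from Step 1 are corrected; as written, the last step of your proposal is a genuine gap rather than a routine absorption of constants.
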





\section{Proof of Lemma \ref{lem:scatcriterion}}\label{sec:scatcrit}
In this section, we will prove Lemma \ref{lem:scatcriterion}, that is
\begin{lemma}[Scattering criterion]\label{lem:scatcriterion-1}
Suppose $u:~\R\times\R^3\to\mathbb{C}$ is a global solution to
\eqref{equ:nls} satisfying
\begin{equation}\label{equ:uhasuni-1}
\sup_{t\in\R}\|u\|_{H^1_a(\R^3)}\leq E.
\end{equation}
There exist $\epsilon=\epsilon(E)>0$ and $R=R(E)>0$ such that if
\begin{equation}\label{equ:masslim-1}
\liminf\limits_{t\to\infty}\int_{|x|<R}|u(t,x)|^2\;dx\leq\epsilon^2,
\end{equation}
then, $u$ scatters forward in time.
\end{lemma}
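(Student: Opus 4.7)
The plan is to follow the Tao/Dodson--Murphy scattering-criterion strategy, adapted to the inhomogeneous nonlinearity as in \cite{CC} and accommodating the inverse-square potential via the harmonic-analytic tools of Section \ref{subsec:harmonic}. Since \eqref{equ:nls} is $H^1_a$-subcritical with a robust small-data theory built on the Strichartz estimates of Section \ref{sze}, a standard fixed-point/stability argument reduces scattering forward in time from the time $T$ of \eqref{equ:masslim-1} to the assertion that
\[
\|e^{i(t-T)\la}u(T)\|_{L^q_tL^r_x([T,\infty)\times\R^3)} < \delta_0(E)
\]
for an admissible scattering pair $(q,r)$ and a small universal constant $\delta_0(E)$. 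I will thus force this inequality by taking $\epsilon=\epsilon(E)$ small, $R=R(E)$ large, and $T$ large along the $\liminf$ sequence provided by \eqref{equ:masslim-1}.

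\textbf{Backward Duhamel and time-splitting.} I would first apply Duhamel at $T$,
\[
e^{i(t-T)\la}u(T) \;=\; e^{it\la}u_0 \;-\; i\lambda\int_0^T e^{i(t-s)\la}\big(|x|^{-b}|u|^2u\big)(s)\,ds,
\]
and split the integral at $s=T-\eta$ into an ``old'' piece $I_1$ on $[0,T-\eta]$ and a ``recent'' piece $I_2$ on $[T-\eta,T]$, where $\eta=\eta(\epsilon)$ is to be chosen large. The free term $e^{it\la}u_0$ tends to $0$ in $L^q_tL^r_x([T,\infty))$ as $T\to\infty$ by Strichartz and monotone convergence, hence is absorbed by taking $T$ large. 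For $I_1$, the time gap $t-s\geq\eta$ combined with the dispersive estimate of Lemma \ref{thm:disp} (its weighted version when $-\tfrac14<a<0$, made compatible with Hardy/Sobolev via Proposition \ref{pro:equivsobolev}) and the uniform $H^1_a$ bound on $u$ should give $\|I_1\|_{L^q_tL^r_x([T,\infty))} = O(\eta^{-\gamma})$ for some $\gamma>0$, arbitrarily small as $\eta\to\infty$.

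\textbf{The recent piece via local mass propagation.} By Duhamel on $[T-\eta,T]$ the recent piece reads $I_2(t) = e^{i(t-T)\la}\bigl(u(T)-e^{i\eta\la}u(T-\eta)\bigr)$, and by the inhomogeneous Strichartz estimate
\[
\|I_2\|_{L^q_tL^r_x([T,\infty))} \;\lesssim\; \bigl\||x|^{-b}|u|^2u\bigr\|_{L^{\tilde q'}_tL^{\tilde r'}_x([T-\eta,T])}.
\]
I would then split $\R^3$ at $|x|=R/2$: the outer region picks up a factor $R^{-b}$ that is small for $R$ large, courtesy of the $H^1_a$ bound and Hardy/Sobolev; the inner region is controlled by first propagating the smallness $\|u(T)\|_{L^2(|x|<R)}\leq\epsilon$ backward across the window $[T-\eta,T]$ (a commutator estimate for $\tfrac{d}{dt}\!\int\chi_R|u|^2\,dx$ bounds the mass flux by $\eta R^{-1}E^2$, negligible once $R\gg\eta$, and this commutator is identical to the potential-free case since $a/|x|^2$ commutes with radial multipliers) and then converting this $L^2$ smallness into the required dual-Strichartz control on the localized nonlinearity. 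Taking $\eta$ large, then $R\gg\eta$, then $\epsilon$ small yields the target inequality.

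\textbf{Main obstacle.} The principal technical difficulty will be coordinating the three parameters $\eta,R,\epsilon$ compatibly with the weighted dispersive estimate of Lemma \ref{thm:disp}(ii) when $a<0$: the weights $(1+|x|^{-\sigma})$ must be absorbed without destroying either the $\eta^{-\gamma}$ gain of $I_1$ or the $R^{-b}$ gain of the outer part of $I_2$, which requires a careful use of the Sobolev-space equivalence of Proposition \ref{pro:equivsobolev} to shuttle between $\la$- and $|\nabla|$-based function spaces in the nonlinear estimate.
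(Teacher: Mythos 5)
Your proposal follows essentially the same route as the paper's proof: the same reduction to smallness of $\|e^{i(t-T)\la}u(T)\|_{L_t^4L_x^6([T,\infty))}$, the same Duhamel splitting at $T-\eta$ with the dispersive estimate (weighted when $a<0$) handling the old piece and Strichartz plus backward propagation of the localized mass plus the spatial splitting at $|x|\sim R$ handling the recent piece, and you correctly identify the weight $\sigma$ issue as the main technical point. The only detail you leave implicit, which the paper must work out, is that $L_t^4L_x^6$ sits at $\dot H^{1/2}$ regularity, so the dual-Strichartz control of the recent piece requires estimating $|\nabla|^{1/2}$ of the localized nonlinearity (done in the paper by interpolating the $L_x^{6/5}$ bound with a full-gradient bound and then invoking the equivalence of Sobolev spaces).
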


\begin{proof}
It is equivalent to show that
\begin{equation}\label{equ:scatsize}
  \|u\|_{L_t^4L_x^6([0,\infty)\times\R^3)}<+\infty.
\end{equation}
By H\"older's inequality in time, Sobolev embedding and \eqref{equ:uhasuni-1}, we obtain for any $T<+\infty$
\begin{equation*}
   \|u\|_{L_t^4L_x^6([0,T)\times\R^3)}\leq T^\frac14 \|u\|_{L_t^\infty \dot{H}^1([0,\infty)\times\R^3)}\lesssim T^\frac14.
\end{equation*}
Hence, it suffices to prove that for large time $T$
\begin{equation}\label{equ:scatsizelarT}
  \|u\|_{L_t^4L_x^6([T,\infty)\times\R^3)}<+\infty.
\end{equation}
Using the local theory, this will follow from
\begin{equation}\label{equ:linimpnonl}
  \big\|e^{i(t-T)\la}u(T)\big\|_{L_t^4L_x^6([T,\infty)\times\R^3)}\ll1.
\end{equation}
By Duhamel's formula, we have
\begin{align}\nonumber
e^{i(t-T)\la}u(T)=&e^{it\la}u_0+i\int_0^T e^{i(t-s)\la}\big(|x|^{-b}|u|^2u\big)(s)\;ds\\\label{equ:duhforeit}
\triangleq&e^{it\la}u_0+F_1(t,x)+F_2(t,x),
\end{align}
where
$$F_j(t,x):=i\int_{I_j} e^{i(t-s)\la}\big(|x|^{-b}|u|^2u\big)(s)\;ds,$$
and
$$I_1=[0,T-\epsilon^{-\alpha}],\;I_2=[T-\epsilon^{-\alpha},T],$$
for some $\alpha>0$ to be determined lately.

From Strichartz estimates and \eqref{equ:uhasuni-1},  we know that
$$\|e^{it\la}u_0\|_{L_t^4L_x^6([0,\infty)\times\R^3)}\lesssim\|u_0\|_{\dot{H}^\frac12_a}\lesssim E.$$
This together with absolutely continuous yields for sufficiently large $T$
\begin{equation}\label{equ:linpsma}
  \|e^{it\la}u_0\|_{L_t^4L_x^6([T,\infty)\times\R^3)}\leq \epsilon.
\end{equation}

{\bf Estimate of the term $F_1(t,x)$:} Using Duhamel's formula, we can rewrite $F_1(t)$ as
\begin{equation}\label{equ:f2rw}
F_1(t)=e^{i(t-T+\epsilon^{-\alpha})\la}\big[u(T-\epsilon^{-\alpha})\big]-e^{it\la}u_0.
\end{equation}
By Strichartz estimates and \eqref{equ:uhasuni-1}, we get
\begin{equation}\label{equ:l4l6est}
\|F_1\|_{L_t^2 L_x^6\cap L_t^\infty L_x^2([T,\infty)\times\R^3)}\lesssim\|u_0\|_{L_x^2}\lesssim E.
\end{equation}
On the other hand, by Lemma \ref{thm:disp}, we get
\begin{align}\nonumber
\|F_1(t)\|_{L_x^6(|x|\leq
1)}\lesssim&\int_{I_1}\big\|e^{i(t-s)\la}\big(|x|^{-b}|u|^2u\big)(s)\big\|_{L_x^6(|x|\leq
1)}\;ds\\\nonumber
\lesssim&\int_{I_1}\big\|(1+|x|^{-\sigma_1})^{-1}e^{i(t-s)\la}\big(|x|^{-b}|u|^2u\big)\big\|_{L_x^\infty}\;ds
\cdot\|(1+|x|^{-\sigma_1})\|_{L_x^6(|x|\leq
1)}\\\nonumber
\lesssim&\int_{I_1}|t-s|^{-\frac{3}2+\sigma_1}\big\|(1+|x|^{-\sigma_1})\big(|x|^{-b}|u|^2u\big)\big\|_{L_x^1}\;ds\\\nonumber
\lesssim&|t-T+\epsilon^{-\alpha}|^{-\frac12+\sigma_1}\\\label{equ:pgeq2-1-1}
\lesssim&\epsilon^{(\frac12-\sigma_1)\alpha},\quad
t>T,
\end{align}
where
\begin{equation}\label{equ:alpha1}
\sigma_1=\begin{cases} 0\quad \text{if}\quad a\geq0\\
\sigma\quad\text{if}\quad a<0,
\end{cases}
\end{equation}
with $\sigma$ being as in Proposition \ref{pro:equivsobolev} and we have used the estimate for $a\geq0$
\begin{align*}
 \big\||x|^{-b}|u|^2u\big\|_{L^1_x}\lesssim &\big\||x|^{-b}\big\|_{L_x^{\frac{3}{b}-}(|x|\leq1)}\|u\|_{L_x^{\frac{9}{3-b}+}}^3+
 \big\||x|^{-b}\big\|_{L_x^{\frac{3}{b}+}(|x|\geq1)}\|u\|_{L_x^{\frac{9}{3-b}-}}^3\lesssim \|u\|_{H^1}^3\lesssim 1,
\end{align*}
while for $a<0$
\begin{align*}
\big\||x|^{-\sigma}\big(|x|^{-b}|u|^2u\big)\big\|_{L^1_x}\lesssim&\big\||x|^{-(\sigma+b)}\big\|_{L_x^{\frac{3}{\sigma+b}-}(|x|\leq1)}
\|u\|_{L_x^{\frac{9}{3-\sigma-b}+}}^3+
 \big\||x|^{-b}\big\|_{L_x^{\frac{3}{\sigma+b}+}(|x|\geq1)}\|u\|_{L_x^{\frac{9}{3-\sigma-b}-}}^3\\
 \lesssim& \|u\|_{H^1}^3\lesssim 1,
\end{align*}
since $3<\tfrac{9}{3-b},\tfrac{9}{3-\sigma-b}<6$ by the fact that $0<b<1$ and $0\leq\sigma<\tfrac12$.

By the same argument as above,  we obtain
\begin{align}\nonumber
\|F_1(t)\|_{L_x^\infty(|x|\geq
1)}\lesssim&\int_{I_1}\big\|e^{i(t-s)\la}\big(|x|^{-b}|u|^2u\big)(s)\big\|_{L_x^\infty(|x|\geq
1)}\;ds\\\nonumber
\lesssim&\int_{I_1}\big\|(1+|x|^{-\sigma_1})^{-1}e^{i(t-s)\la}\big(|x|^{-b}|u|^2u\big)\big\|_{L_x^\infty}\;ds
\\\nonumber
\lesssim&\int_{I_1}|t-s|^{-\frac{3}2+\sigma_1}\big\|(1+|x|^{-\alpha_1})\big(|x|^{-b}|u|^2u\big)\big\|_{L_x^1}\;ds\\\nonumber
\lesssim&|t-T+\epsilon^{-\alpha}|^{-\frac12+\sigma_1}\\\label{equ:pgeq2}
\lesssim&\epsilon^{(\frac12-\sigma_1)\alpha},\quad
t>T.
\end{align}
Interpolating this with \eqref{equ:l4l6est}, we deduce that
$$\|F_1(t)\|_{L_x^6(|x|\geq
1)}\lesssim \|F_1(t)\|_{L_x^2}^\frac13\|F_1(t)\|_{L_x^\infty(|x|\geq
1)}^\frac23\lesssim \epsilon^{\frac23(\frac12-\sigma_1)\alpha}.$$
Combining this  with \eqref{equ:pgeq2-1-1}, one has
\begin{equation}\label{equ:f1linfl6e}
  \|F_1\|_{L_t^\infty([T,\infty),L_x^6)}\lesssim \epsilon^{\frac23(\frac12-\sigma_1)\alpha}.
\end{equation}
Interpolating this with \eqref{equ:l4l6est} again yields
\begin{align}\label{equ:f1est}
\|F_1\|_{L_t^4([T,\infty),L_x^6)}\lesssim\|F_1\|_{L_t^\infty([T,\infty),L_x^6)}^\frac12
\|F_1\|_{L_t^2([T,\infty),L_x^6)}^\frac12 \lesssim\epsilon^{\frac13(\frac12-\sigma_1)\alpha}.
\end{align}

{\bf Estimate of the term $F_2(t,x)$:}
First, by the assumption \eqref{equ:masslim-1}, we
may choose $T>T_0$
\begin{equation}\label{equ:assumest}
\int_{\R^3} \chi_R(x)|u(T,x)|^2\;dx\leq \epsilon^2,
\end{equation}
where $\chi_R(x)\in C_c^\infty(\R^3)$ and
\begin{equation*}
\chi_R(x)=\begin{cases}
1\quad \text{if}\quad |x|\leq R,\\
0\quad \text{if}\quad |x|\geq 2R.
\end{cases}
\end{equation*}
On the other hand, combining the identity $\pa_t|u|^2=-2\nabla\cdot{\rm Im}(\bar{u}\nabla u)$ and integration by parts, H\"older's inequality, we obtain
\begin{equation*}
\Big|\pa_t\int \chi_R(x)|u(t,x)|^2\;dx\Big|\lesssim\frac1R.
\end{equation*}
Hence, for any $t\in I_2$
\begin{equation}\label{equ:tI2smal}
\int_{\R^3} \chi_R(x)|u(T,x)|^2\;dx\lesssim \epsilon^2+\frac{\epsilon^\alpha}{R}\lesssim \epsilon^2
\end{equation}
by choosing $R\geq \epsilon^{-(2-\alpha)}.$

Using H\"older's inequality, \eqref{equ:tI2smal}, Hardy's inequality, Sobolev embedding, we obtain for any $t\in I_2$
\begin{align}\nonumber
\big\| |x|^{-b}|u|^2u\big\|_{L_x^\frac65}\leq & \big\|\chi_R |x|^{-b}|u|^2u\big\|_{L_x^\frac65}+ \big\|(1-\chi_R) |x|^{-b}|u|^2u\big\|_{L_x^\frac65}\\\nonumber
\lesssim&\big\|\chi_R u\big\|_{L_x^\frac{6}{3-2b}}\big\||x|^{-b}|u|^2\big\|_{L_x^\frac{3}{1+b}}+R^{-b}\|u\|_{L_x^\frac{18}{5}}^3\\\nonumber
\lesssim&\big\|\chi_R u\big\|_{L_x^2}^{1-b}\big\|\chi_R u\big\|_{L_x^6}^{b}\big\||\nabla|^b(|u|^2)\big\|_{L_x^\frac{3}{1+b}}+R^{-b}\|u\|_{H^1}^3\\\nonumber
\lesssim&\epsilon^{1-b}\|u\|_{\dot{H}^1}^b\big\|\nabla (|u|^2)\big\|_{L_x^\frac32}+\epsilon^{(2-\alpha)b}\|u\|_{H^1}^3\\\nonumber
\lesssim&\epsilon^{1-b}\|u\|_{\dot{H}^1}^{2+b}+\epsilon^{(2-\alpha)b}\|u\|_{H^1}^3\\
\lesssim&\epsilon^{1-b}+\epsilon^{(2-\alpha)b}.
\end{align}
Hence, by H\"older's inequality in time, one has
\begin{equation}\label{equ:chiRes}
  \big\| |x|^{-b}|u|^2u\big\|_{L_t^2L_x^\frac65(I_2\times\R^3)}\lesssim \epsilon^{-\frac{\alpha}2}\big( \epsilon^{1-b}+\epsilon^{(2-\alpha)b} \big).
\end{equation}
On the other hand, by the same argument as in \cite[Lemma 2.7]{C21}, we have
$$\big\|\nabla \big(|x|^{-b}|u|^2u\big)\big\||_{L_x^\frac65}\lesssim \big\||u|^2\nabla u\big\|_{L_x^{\frac{6}{5-2b}\pm}}.$$
And so
\begin{align}\nonumber
  \big\|\nabla \big(|x|^{-b}|u|^2u\big)\big\||_{L_t^2L_x^\frac65(I_2\times\R^3)}\lesssim & \big\||u|^2\nabla u\big\|_{L_t^2L_x^{\frac{6}{5-2b}\pm}(I_2\times\R^3)}\\\nonumber
  \lesssim&\|u\|_{L_t^8L_x^{\frac{12}{3-2b}+}(I_2\times\R^3)}^2\|\nabla u\|_{L_t^4L_x^{3-}(I_2\times\R^3)}\\\label{equ:uepsilnab}
  \lesssim&|I_2|^\frac12\lesssim \epsilon^{-\frac{\alpha}{2}},
\end{align}
where we have used
$$\big\|\langle\nabla\rangle u\big\|_{L_t^q L_x^r(I\times\R^3)}\lesssim (1+|I|)^\frac1q,$$
which follows by using Strichartz estimate and continuous
argument.

Interpolating \eqref{equ:uepsilnab} with \eqref{equ:chiRes}, we obtain
\begin{equation}\label{equ:nab12es}
  \big\||\nabla|^\frac12 \big(|x|^{-b}|u|^2u\big)\big\||_{L_t^2L_x^\frac65(I_2\times\R^3)}\lesssim \epsilon^{-\frac{\alpha}{2}}\big( \epsilon^{1-b}+\epsilon^{(2-\alpha)b} \big)^\frac12.
\end{equation}
 Thus, we use Sobolev embedding, Strichartz
estimate, equivalence of Sobolev spaces (Lemma
\ref{pro:equivsobolev}) to get
\begin{align*}
  \|F_2\|_{L_t^4L_x^6([T,\infty)\times\R^3)}
  \lesssim&\big\||\nabla|^\frac12F_2\big\|_{L_t^4L_x^3([T,\infty)\times\R^3)}\\
   \lesssim&\big\|\la^\frac14F_2\big\|_{L_t^4L_x^3([T,\infty)\times\R^3)}\\
   \lesssim & \big\|\la^\frac14 \big(|x|^{-b}|u|^2u\big)\big\||_{L_t^2L_x^\frac65(I_2\times\R^3)}\\
  \lesssim & \big\||\nabla|^\frac12 \big(|x|^{-b}|u|^2u\big)\big\||_{L_t^2L_x^\frac65(I_2\times\R^3)}\\
  \lesssim& \epsilon^{-\frac{\alpha}{2}}\big( \epsilon^{1-b}+\epsilon^{(2-\alpha)b} \big)^\frac12\\
  \lesssim&\epsilon^{\frac{\alpha}2},
\end{align*}
by choosing
$$\alpha=\min\big\{\tfrac{1-b}2,\tfrac{2b}{2+b}\big\}.$$
Combining this with \eqref{equ:duhforeit}, \eqref{equ:linpsma} and \eqref{equ:f1est}, we get
\begin{equation}\label{equ:eitlinesmal}
  \|e^{i(t-T)\la}u(T)\|_{L_t^4L_x^6([T,\infty)\times\R^3)}\lesssim \epsilon+\epsilon^{\frac13(\frac12-\sigma_1)\alpha}+\epsilon^{\frac{\alpha}2}.
\end{equation}
And so \eqref{equ:linimpnonl} follows.

\end{proof}




\section{Virial-Morawetz estimates}\label{sec:vme}

 \subsection{Morawetz estimate}\label{subsec:mor}

First, we recall some standard virial-type identities.
 Given a weight $w:\R^3\to\R$ and a solution $u$ to $i\pa_tu+\Delta u= F$, we define
\[
V(t;w) := 2\Im \int_{\R^3} \bar u \nabla u \cdot \nabla w \,dx.
\]
It is easy to see that
\begin{align}\label{virial}
& \partial_{t}V(t;w) = \int_{\R^3} \Big[(-\Delta\Delta w)|u|^2 + 4\Re(\bar u_j u_k) w_{jk} +2\nabla w\cdot\{F,u\}_P \Big]\,dx,
\end{align}
 where $\{f,g\}_P=\Re(f\nabla\bar{g}-g\nabla\bar{f}).$ Especially, if $u$ solves \eqref{equ:nls}, then
\begin{align}\nonumber
  \partial_{t}V(t;w)=&\int_{\R^3} \Big[(-\Delta\Delta w)|u|^2 + 4\Re(\bar u_j u_k) w_{jk} +4a\frac{x\cdot \nabla w}{|x|^4}|u|^2 \Big]\,dx\\\label{equ:virialdef}
  &+\lambda b\int_{\R^3}\frac{x\cdot\nabla w}{|x|^{2+b}}|u|^4\;dx+\lambda\int_{\R^3}\Delta w(x)\frac{|u|^4}{|x|^b}\;dx.
\end{align}

Taking $w(x)=|x|^2$, one can obtain
the standard virial identity.

\begin{lemma}[Standard virial identity] Let $u$ be a solution to \eqref{equ:nls} with $\lambda=1$.  Then, there holds
\begin{equation}\label{equ:vtx2}
  \partial_{t} V(t;|x|^2) = 8\Bigl[\|u(t)\|_{\dot H_a^1}^2 +\lambda\int_{\R^3}\frac{|u|^4}{|x|^b}\;dx\Bigr].
\end{equation}
\end{lemma}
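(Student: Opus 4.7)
My plan is to derive \eqref{equ:vtx2} by direct substitution of $w(x) = |x|^2$ into the general virial identity \eqref{equ:virialdef}. The relevant derivatives of this weight are completely explicit:
\begin{align*}
\nabla w = 2x, \quad w_{jk} = 2\delta_{jk}, \quad \Delta w = 6, \quad \Delta\Delta w = 0, \quad x \cdot \nabla w = 2|x|^2.
\end{align*}
Inserting these into \eqref{equ:virialdef} kills the bi-Laplacian term outright, collapses the Hessian contraction via $\sum_{jk} \Re(\bar u_j u_k)\,2\delta_{jk} = 2|\nabla u|^2$ (contributing $8\int |\nabla u|^2\,dx$), and turns the inverse-square-potential piece into $8a \int |u|^2/|x|^2\,dx$. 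Together these recombine into $8\|u(t)\|_{\dot H_a^1}^2$ by the definition \eqref{equ:doth1def}. The two nonlinear contributions, one coming from the derivative of $|x|^{-b}$ and the other from $\Delta w$ acting on the nonlinearity, then combine (using $x \cdot \nabla w = 2|x|^2$ and $\Delta w = 6$) into the single weighted quartic integral appearing on the right-hand side of \eqref{equ:vtx2}.

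The only subtlety is that the weight $w(x) = |x|^2$ is unbounded, so $V(t;|x|^2)$ need not even be finite for a general $u(t) \in H^1_a(\R^3)$. I would handle this in the standard way by first running the computation with the truncated weight $w_R(x) = |x|^2 \chi(x/R)$ for some smooth cutoff $\chi \in C_c^\infty(\R^3)$ equal to $1$ on the unit ball, verifying that the extra error terms produced by the cutoff vanish as $R \to \infty$ whenever $xu_0 \in L^2$, and appealing to density. For the purposes of Section \ref{sec:vme} this issue is essentially inert: the Morawetz argument ultimately uses an explicitly truncated weight, and \eqref{equ:vtx2} serves mainly as the clean model identity to aim for.

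The main obstacle is therefore just bookkeeping: one must keep the contributions from the inverse-square potential $a/|x|^2$, from the inhomogeneity $|x|^{-b}$, and from the cubic nonlinearity cleanly separated so that they reassemble correctly into the $\dot H_a^1$-norm and a single weighted $L^4$ integral. No new analytic input is required beyond the already-stated formula \eqref{equ:virialdef} and the elementary derivatives of $|x|^2$.
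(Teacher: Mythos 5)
Your approach is exactly the one the paper intends (the paper offers nothing more than ``taking $w(x)=|x|^2$'' before stating the lemma), and your computation of the linear pieces is correct: $\Delta\Delta w=0$, $4\Re(\bar u_j u_k)\cdot 2\delta_{jk}=8|\nabla u|^2$, and $4a\frac{x\cdot\nabla w}{|x|^4}|u|^2=8a\frac{|u|^2}{|x|^2}$ do reassemble into $8\|u(t)\|_{\dot H^1_a}^2$ via \eqref{equ:doth1def}. Your caveat about the unboundedness of the weight is also apt and is acknowledged by the paper itself immediately after the lemma.

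However, the one step you wave through --- that the two nonlinear contributions ``combine \ldots into the single weighted quartic integral appearing on the right-hand side of \eqref{equ:vtx2}'' --- is precisely where the bookkeeping does not close. Substituting $x\cdot\nabla w=2|x|^2$ and $\Delta w=6$ into the last two terms of \eqref{equ:virialdef} gives
\begin{equation*}
\lambda b\int_{\R^3}\frac{2|x|^2}{|x|^{2+b}}|u|^4\,dx+\lambda\int_{\R^3}6\,\frac{|u|^4}{|x|^b}\,dx=(6+2b)\,\lambda\int_{\R^3}\frac{|u|^4}{|x|^b}\,dx,
\end{equation*}
whereas \eqref{equ:vtx2} asserts the coefficient $8\lambda$; these agree only when $b=1$, which is excluded by the standing hypothesis $0<b<1$. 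So either you have made an arithmetic slip in asserting the match, or (more likely) \eqref{equ:vtx2} as printed contains a typo and should read $8\|u(t)\|_{\dot H^1_a}^2+(6+2b)\lambda\int_{\R^3}|x|^{-b}|u|^4\,dx$. A complete proof must display this coefficient rather than assert agreement with the stated right-hand side; note the discrepancy is harmless for the downstream Morawetz argument, since only the sign of the quartic term (and the truncated identity of Lemma \ref{lem:trancdef}) is ever used.
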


However, in our case, the virial action  $V(t;|x|^2)$ maybe infinite. Hence, we need a truncated version of the virial identity (cf. \cite{OgaTsu91}, for example). For $R>1$, we define $w_R(x)$ to be a smooth, non-negative radial function satisfying
\begin{equation}\label{equ:wrdef}
w_R(x)=\begin{cases} |x|^2 & |x|\leq \frac{R}2 \\ 2R|x| & |x|>R,\end{cases}
\end{equation}
with
\begin{equation}\label{equ:wrcond}
\pa_rw_R\geq0,~\pa_r^2w_R\geq0,~|\pa^\alpha w_R(x)|\lesssim_\alpha R|x|^{-|\alpha|+1},~|\alpha|\geq1.
\end{equation}
 This together with  \eqref{equ:virialdef} yields:
\begin{lemma}[Truncated virial identity]\label{lem:trancdef} Let $u$ be a solution to \eqref{equ:nls}  and let $R>1$. Then, we have
\begin{align*}
 \partial_{t} V(t;w_R)
  =& 8\int_{|x|\leq\frac{R}2}\Bigl[|\nabla u(t)|^2+a\tfrac{|u|^2}{|x|^2} +\lambda \tfrac{|u|^4}{|x|^b}\Bigr]\;dx  \\
& + \int_{|x|>R} \Bigl[8aR\tfrac{|u|^2}{|x|^3}+2(2+b)\lambda\tfrac{R}{|x|^{1+b}}|u|^{4}+\tfrac{8R}{|x|}(|\nabla u|^2-|\pa_ru|^2)\Bigr]\;dx \\
&  +\int_{\frac{R}2\leq|x|\leq R}\Bigl[4{\rm Re}\pa_{jk}w_R\bar{u}_j\pa_ku+O\bigl( \tfrac{R}{|x|^{1+b}}|u|^{4} + \tfrac{R}{|x|^3}|u|^2\bigr)\Bigr]\;dx.
\end{align*}
Moreover, it follows from \eqref{equ:wrcond} that $$\int_{\frac{R}2\leq|x|\leq R}\Bigl[4{\rm Re}\pa_{jk}w_R\bar{u}_j\pa_ku\Bigr]\;dx\geq 0.$$
\end{lemma}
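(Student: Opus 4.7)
The plan is to apply the general virial identity \eqref{equ:virialdef} with the truncated weight $w_R$ of \eqref{equ:wrdef}, then split the integral over $\R^3$ into the three regions dictated by its piecewise definition: the inner ball $\{|x|\leq R/2\}$, the outer region $\{|x|>R\}$, and the transition annulus $\{R/2\leq|x|\leq R\}$.

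On the inner ball I simply use $w_R(x)=|x|^2$, which gives $\nabla w_R=2x$, $\Delta w_R=6$, $\Delta^2 w_R=0$, $\partial_j\partial_k w_R=2\delta_{jk}$, and $x\cdot\nabla w_R=2|x|^2$; substitution into \eqref{equ:virialdef} reproduces the closed-form identity \eqref{equ:vtx2} restricted to this ball, hence the first bracket of the conclusion. On the outer region $w_R=2R|x|$, so $\nabla w_R=2Rx/|x|$, $\Delta w_R=4R/|x|$, $\Delta^2 w_R=0$, $x\cdot\nabla w_R=2R|x|$, and the Hessian is
\[
\partial_j\partial_kw_R=\frac{2R}{|x|}\Bigl(\delta_{jk}-\frac{x_jx_k}{|x|^2}\Bigr).
\]
Contracting against $\mathrm{Re}(\bar u_j u_k)$ and using $x\cdot\nabla u=|x|\partial_r u$ produces $(8R/|x|)(|\nabla u|^2-|\partial_r u|^2)$; the potential and nonlinear terms of \eqref{equ:virialdef} assemble into $8aR|u|^2/|x|^3$ and $2(2+b)\lambda R|u|^4/|x|^{1+b}$, yielding the second bracket exactly.

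On the transition annulus I use only the derivative bounds \eqref{equ:wrcond}. The estimate $|\Delta^2 w_R|\lesssim R/|x|^3$ controls the biharmonic contribution by $O(R|u|^2/|x|^3)$; the bounds $|\nabla w_R|\lesssim R$ and $|\Delta w_R|\lesssim R/|x|$ control both nonlinear contributions in \eqref{equ:virialdef} by $O(R|u|^4/|x|^{1+b})$; and $|x\cdot\nabla w_R|/|x|^4\lesssim R/|x|^3$ handles the inverse-square contribution by $O(R|u|^2/|x|^3)$. What remains explicitly is the Hessian contraction $4\mathrm{Re}(\bar u_j u_k)\partial_{jk}w_R$, giving exactly the form recorded in the lemma.

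To establish the non-negativity assertion I exploit the radiality of $w_R$. Writing $w_R(x)=\phi(|x|)$ and decomposing
\[
\partial_{jk}w_R=\phi''(r)\frac{x_jx_k}{r^2}+\frac{\phi'(r)}{r}\Bigl(\delta_{jk}-\frac{x_jx_k}{r^2}\Bigr),
\]
I obtain
\[
4\mathrm{Re}(\bar u_j u_k)\partial_{jk}w_R=4\phi''(r)|\partial_r u|^2+\frac{4\phi'(r)}{r}\bigl(|\nabla u|^2-|\partial_r u|^2\bigr),
\]
which is pointwise non-negative because $\phi'=\partial_r w_R\geq 0$ and $\phi''=\partial_r^2 w_R\geq 0$ by \eqref{equ:wrcond} and $|\nabla u|^2\geq|\partial_r u|^2$. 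The main burden is not any single analytic difficulty but careful bookkeeping on the transition annulus, verifying that every term not retained fits into the advertised $O(R|u|^4/|x|^{1+b}+R|u|^2/|x|^3)$ template.
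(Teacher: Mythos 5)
Your approach is exactly the one the paper intends: the paper gives no proof beyond the remark that the lemma follows from \eqref{equ:virialdef} together with \eqref{equ:wrdef}--\eqref{equ:wrcond}, and your region-by-region substitution, the $O(\cdot)$ bookkeeping on the transition annulus, and the radial decomposition of the Hessian (using $\pa_rw_R\geq0$, $\pa_r^2w_R\geq0$ and $|\nabla u|^2\geq|\pa_ru|^2$) for the sign claim are all correct and supply the missing details. One caveat: on the inner ball the substitution does \emph{not} literally reproduce the stated bracket. With $w=|x|^2$ one has $\Delta w=6$ and $x\cdot\nabla w=2|x|^2$, so the last two terms of \eqref{equ:virialdef} contribute $(6+2b)\lambda\int_{|x|\leq R/2}|x|^{-b}|u|^4\,dx$ rather than $8\lambda\int_{|x|\leq R/2}|x|^{-b}|u|^4\,dx$ (the two agree only at $b=1$); note your own outer-region computation correctly yields the analogous coefficient $2(2+b)$ there. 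The discrepancy is a typo already present in \eqref{equ:vtx2} and is harmless for the later use of the lemma, where only the sign of this coefficient matters, but you should record the coefficient you actually obtain instead of asserting that the computation ``reproduces'' the first bracket.
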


\subsection{Proof of Proposition \ref{prop:virmorest}}

In this subsection, we aim to prove Proposition \ref{prop:virmorest} by the virial/Morawetz argument.
 By Lemma \ref{lem:trancdef}, we get
\begin{align}
\nonumber &\partial_{t} V(t;w_R)\\\nonumber
\geq& 8\int_{|x|\leq\frac{R}2}\Bigl[|\nabla u(t)|^2+a\tfrac{|u|^2}{|x|^2} +\lambda \tfrac{|u|^4}{|x|^b}\Bigr]\;dx  \\\nonumber
& + 8aR\int_{|x|>R}\tfrac{|u|^2}{|x|^3}\;dx  +\int_{|x|\geq\frac{R}2}\Bigl[O\bigl( \tfrac{R}{|x|^{1+b}}|u|^{4} + \tfrac{R}{|x|^3}|u|^2\bigr)\Bigr]\;dx\\\label{equ:mainter1}
\geq& 8\int_{|x|\leq\frac{R}2}\Bigl[|\nabla u(t)|^2+a\tfrac{|u|^2}{|x|^2} + \lambda\tfrac{|u|^4}{|x|^b}\Bigr]\;dx\\\nonumber& -\frac{C}{R^b}.
\end{align}
We define $\chi_R(x)$ to be a smooth function
\begin{equation}\label{equ:chirdef}
  \chi_R(x):=\begin{cases}
   1\quad |x|\leq R/4\\
   0\quad |x|\geq R/2.
  \end{cases}
\end{equation} Note that
\begin{equation}\label{equ:identity}
\int\chi_R^2|\nabla u|^2\;dx=\int\Big[|\nabla(\chi_Ru)|^2+\chi_R\Delta(\chi_R)|u|^2\Big]\;dx,
\end{equation}
we get
\begin{align}\nonumber
\eqref{equ:mainter1}\geq&8\int\chi_R^2\Bigl[|\nabla u(t)|^2+a\tfrac{|u|^2}{|x|^2}+ \lambda\tfrac{|u|^4}{|x|^b}\Bigr]\;dx+\int O\big(\tfrac{|u|^2}{R^2}\big)\;dx+\int_{\frac{R}4\leq|x|\leq\frac{R}2}O\big(\tfrac{|u|^4}{R^b}\big)\;dx \\\nonumber
\geq&8\Big[\big\|\chi_Ru\big\|_{\dot{H}^1_a}^2+\lambda \int_{\R^3}\tfrac{|\chi_Ru|^4}{|x|^b}\;dx\Big]\\\nonumber
&+\int\chi_R\Delta(\chi_R)|u|^2\;dx+\int O\big(\tfrac{|u|^2}{R^2}\big)\;dx-\frac{C}{R^b}\\\nonumber
\geq&c \int_{\R^3}\tfrac{|\chi_Ru|^4}{|x|^b}\;dx-\frac{C}{R^b}-\frac{C}{R^2}\\\label{equ:mainter2}
\geq&c \int_{|x|\leq\frac{R}4}\tfrac{|u|^4}{|x|^b}\;dx-\frac{C}{R^b}-\frac{C}{R^2},
\end{align}
where we have used the fact that
$$\big\|\chi_Ru\big\|_{\dot{H}^1_a}^2+ \lambda\int_{\R^3}\tfrac{|\chi_Ru|^4}{|x|^b}\;dx\geq c\int_{\R^3}\tfrac{|\chi_Ru|^4}{|x|^b}\;dx.$$
It is easy to see that it holds when $\lambda=1$. While for $\lambda=-1$, this follows by the same argument as in Proposition \ref{P:coercive} $(ii)$.

Thus, plugging \eqref{equ:mainter2} into \eqref{equ:mainter1}, we obtain
\begin{align*}
\partial_{t} V(t;w_R)\geq c\int_{|x|\leq\frac{R}4}\tfrac{|u|^4}{|x|^b}\;dx-\frac{C}{R^b}.
\end{align*}
Using the fundamental theorem of calculus on an interval $[0,T]$,  we have
$$\int_0^T\int_{|x|\leq\frac{R}4}\tfrac{|u|^4}{|x|^b}\;dx\;dt\lesssim\sup_{t\in[0,T]}|V(t,w_R)|+\frac{T}{R^b}.$$
From H\"older's inequality, \eqref{virial} and potential energy bound, we get
$$\sup_{t\in\R}|V(t,w_R)|\lesssim R.$$
 Thus, we obtain
$$\frac1{T}\int_0^T\int_{|x|\leq \frac{R}4}|u(t,x)|^{p+1}\;dx\;dt\lesssim\frac{R}{T}+\frac1{R^b},$$
which is accepted. Hence we conclude the proof of Proposition \ref{prop:virmorest}.




\section*{Appendix}

In this appendix, we will establish the interaction Morawetz estimate for the solution to \eqref{equ:nls} with $\lambda=1$. As an application, we give a simple proof for the scattering theory for \eqref{equ:nls} with $\lambda=1$ but $a>0$.

\begin{theorem}[Interaction Morawetz estimate]\label{thm:ime}
Let  $a>0,\;0<b<1$ and $u:\;\R\times\R^3\to\C$ solve \eqref{equ:nls} with $\lambda=1$ and $u_0\in H^1(\R^3)$. Then, there holds
\begin{equation}\label{equ:ime}
  \|u\|_{L_{t,x}^4(\R\times\R^3)}\leq C\big(M(u_0),E(u_0)\big).
\end{equation}
Interpolating this with $\|u\|_{L_t^\infty L_x^6}\lesssim \|u\|_{L_t^\infty \dot{H}^1}\lesssim \sqrt{E(u_0)}$ implies
$$\|u\|_{L_t^6 L_x^\frac{9}{2}(\R\times\R^3)}\leq  C\big(M(u_0),E(u_0)\big).$$
As a consequence, the global solution $u$ to \eqref{equ:nls} with $\lambda=1$ scatters.
\end{theorem}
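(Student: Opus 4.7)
The plan is to adapt the Colliander--Keel--Staffilani--Takaoka--Tao interaction Morawetz identity to the present setting, with the inverse-square potential handled by its repulsivity and the inhomogeneous weight $|x|^{-b}$ producing a favorable sign. Concretely, I would work with the interaction Morawetz functional
\[
\mathcal{M}(t)=2\iint_{\R^3\times\R^3}|u(t,y)|^2\,\frac{x-y}{|x-y|}\cdot\mathrm{Im}\bigl(\bar u(t,x)\nabla u(t,x)\bigr)\,dx\,dy,
\]
whose a priori bound $|\mathcal{M}(t)|\lesssim M(u_0)^{3/2}\|u(t)\|_{\dot H^{1/2}}\lesssim C(M(u_0),E(u_0))$ holds uniformly in $t$ by Cauchy--Schwarz, interpolation, and conservation of mass and energy.

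Differentiating $\mathcal{M}$ in $t$ via the identity \eqref{virial} applied with $F=\tfrac{a}{|x|^2}u+|x|^{-b}|u|^2 u$, and using the local mass conservation $\partial_t|u|^2=-2\nabla\cdot\mathrm{Im}(\bar u\nabla u)$ to account for the derivative falling on the $|u(y)|^2$ factor, splits $\partial_t\mathcal{M}$ into four pieces. The bilaplacian piece, using the three-dimensional identity $-\Delta_x^2|x-y|=8\pi\,\delta_{x=y}$, produces the main positive quantity $8\pi\,M(u_0)\int|u(t,x)|^4\,dx$. The Hessian and bilinear momentum-flux pieces assemble into a non-negative angular-momentum expression as in the standard CKSTT computation. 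The potential piece, after integration by parts, reduces to an integral whose sign is controlled by the repulsivity $x\cdot\nabla(a/|x|^2)=-2a/|x|^2\leq 0$ together with the hypothesis $a>0$. The inhomogeneous nonlinearity piece equals
\[
\iint|u(t,y)|^2\Big[\tfrac{2|u(t,x)|^4}{|x|^b|x-y|}+\tfrac{b\,(x-y)\cdot x}{|x-y|\,|x|^{b+2}}|u(t,x)|^4\Big]\,dx\,dy,
\]
which is non-negative since $b<1$ and $|(x-y)\cdot x|\leq|x-y|\cdot|x|$, so the first summand dominates the second pointwise. Collecting and integrating in $t$ over $\R$ gives $\|u\|_{L^4_{t,x}(\R\times\R^3)}^4\lesssim\sup_t|\mathcal{M}(t)|\lesssim C(M(u_0),E(u_0))$, which is \eqref{equ:ime}.

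The $L^6_tL^{9/2}_x$ bound is then immediate from interpolation with $\|u\|_{L^\infty_tL^6_x}\lesssim E(u_0)^{1/2}$, which comes from Sobolev embedding applied to the energy bound. To upgrade to the scattering norm $\|u\|_{L^4_tL^6_x}<\infty$ I would partition $\R$ into finitely many subintervals on which the $L^6_tL^{9/2}_x$ norm is arbitrarily small, and on each subinterval run the standard Strichartz bootstrap: using the equivalence of Sobolev spaces (Proposition \ref{pro:equivsobolev}), the Strichartz estimates of Section \ref{sze}, and the fractional chain rule (Lemma \ref{lem:frac}), the Duhamel contribution of $|x|^{-b}|u|^2u$ is controlled by the small local mass in $L^6_tL^{9/2}_x$; summing the subinterval estimates yields a global bound on the scattering norm, whence convergence of $e^{-it\la}u(t)$ in $H^1_a$ as $t\to\pm\infty$ follows routinely.

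The principal obstacle I anticipate is verifying the sign of the inverse-square potential contribution to $\partial_t\mathcal{M}$. In the single-particle Morawetz, the repulsivity condition $x\cdot\nabla V\leq 0$ yields positivity pointwise, but in the interaction setting the weight $(x-y)/|x-y|$ mixes the two particles and the required positivity only emerges after an $x\leftrightarrow y$ symmetrization combined with the hypothesis $a>0$; this is precisely why the appendix argument is restricted to $a>0$, while negative $a$ must be handled by the virial/Morawetz approach of Theorem \ref{thm:main}. A secondary technical issue is justifying the formal integrations by parts for $H^1_a$ solutions, which I would address by a standard mollification-and-truncation procedure before passing to the limit.
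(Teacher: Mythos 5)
Your overall framework (the CKSTT interaction functional centered at $y$ and weighted by $|u(y)|^2$, the a priori bound $|\mathcal{M}(t)|\lesssim C(M,E)$, the positivity of the bilaplacian and Hessian/momentum-flux pieces, and the concluding Strichartz bootstrap off $L_t^6L_x^{9/2}$ smallness) matches the paper's. But the two sign claims on which your argument hinges are both false, and this is exactly where the content of the appendix lies. For the potential piece, the interaction weight produces $4a\iint|u(y)|^2\tfrac{x\cdot(x-y)}{|x|^4|x-y|}|u(x)|^2\,dx\,dy$ (cf.\ \eqref{equ:main4-2}), and the $x\leftrightarrow y$ symmetrization you invoke does \emph{not} restore positivity: the symmetrized kernel is $(x-y)\cdot\bigl(\tfrac{x}{|x|^4}-\tfrac{y}{|y|^4}\bigr)$, which is negative for instance at $x=(1,0,0)$, $y=(2,0,0)$, because the vector field $x/|x|^4$ is not monotone. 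Likewise, your claim that $\tfrac{2}{|x|^b|x-y|}|u(x)|^4$ dominates $\tfrac{b|(x-y)\cdot x|}{|x-y|\,|x|^{b+2}}|u(x)|^4$ pointwise reduces, after $|(x-y)\cdot x|\le|x-y||x|$, to $2|x|\geq b|x-y|$, which fails whenever $|x-y|\gg|x|$; so the nonlinearity cross term \eqref{equ:main5-2} is not signed either.

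The paper's resolution --- the idea missing from your proposal --- is not to seek a sign for these cross terms at all. One first proves a classical one-particle Morawetz (Lin--Strauss) estimate, Lemma \ref{lem:clamorz}, namely $\iint\bigl(\tfrac{|u|^2}{|x|^3}+\tfrac{|u|^4}{|x|^{1+b}}\bigr)\,dx\,dt\leq C(M,E)$; its proof with weight $w=|x|$ is where $a>0$ genuinely enters, since the potential contribution there is $\tfrac{4a}{|x|^3}|u|^2\geq0$ pointwise. Then the unsigned interaction terms \eqref{equ:main4-2} and \eqref{equ:main5-2} are bounded brutally in absolute value by $\|u_0\|_{L^2}^2\int\bigl(\tfrac{|u|^2}{|x|^3}+\tfrac{|u|^4}{|x|^{1+b}}\bigr)\,dx$, and absorbed after time integration by the one-particle estimate. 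You should add this auxiliary lemma and the absorption step. (A minor additional slip: the bilaplacian identity collapses the double integral onto the diagonal, so it yields $8\pi\int|u|^4\,dx$, not $8\pi M(u_0)\int|u|^4\,dx$; this does not affect the conclusion.)
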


\begin{remark}
For the higher dimension case $d\geq4$, one can also obtain the interaction Morawetz estimate
\begin{equation}\label{equ:intmeshigh}
  \big\||\nabla|^{-\frac{d-3}{4}}u\big\|_{L_{t,x}^4(\R\times\R^d)}\leq C\big(M(u_0),E(u_0)\big),
\end{equation}
where $u:\R\times\R^d\to\C$ solves $i\pa_tu-\la u=|x|^{-b}|u|^{p-1}u$ with $a>-\tfrac{(d-2)^2}4+\tfrac14$ and $b>0$. Compared with dimension three case and higher dimension case, we can get the interaction Morawetz estimate for some negative $a$ in higher dimension cases.

\end{remark}

To prove Theorem \ref{thm:ime}, we first
consider that the function $u(t,x)$ solves
\begin{equation}\label{equ:nlsF}
i\pa_tu+\Delta u=F(t,x),\quad (t,x)\in\R\times\R^3.
\end{equation}
Define Morawetz action
\begin{equation}\label{equ:moract}
M_w(t):=2{\rm Im}\int_{\R^3}\nabla w(x)\cdot \nabla
u(x)\bar{u}(x)\;dx.
\end{equation}
A simple computation shows
\begin{lemma}[Morawetz identity]\label{lem:moriden}
There holds
\begin{align*}
\frac{d}{dt}M_w(t)=\int_{\R^3}(-\Delta\Delta
w)|u|^2dx+4\int_{\R^3}w_{jk}\Re(\pa_j\bar{u}\pa_ku)dx+2\int_{\R^3}w_j(x)\{F,u\}_P^jdx
\end{align*}
where $\{f,g\}_P=\Re(f\nabla\bar{g}-g\nabla\bar{f}),$ and repeated
indices are implicitly summed.  Especially, if $u$ solves \eqref{equ:nls} with $\lambda=1$, then
\begin{align}\nonumber
  \frac{d}{dt}M_w(t)=&\int_{\R^3} \Big[(-\Delta\Delta w)|u|^2 + 4\Re(\bar u_j u_k) w_{jk} +4a\frac{x\cdot \nabla w}{|x|^4}|u|^2 \Big]\,dx\\\label{equ:virialdefint}
  &\qquad+ b\int_{\R^3}\frac{x\cdot\nabla w}{|x|^{2+b}}|u|^4\;dx+\int_{\R^3}\Delta w(x)\frac{|u|^4}{|x|^b}\;dx.
\end{align}
\end{lemma}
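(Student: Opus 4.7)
\medskip

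\textbf{Proof proposal.} The general identity is the classical stress–energy/Morawetz computation for a Schr\"odinger equation with source. I would start by differentiating $M_w(t)$ directly, using the product rule and $i\pa_t u=-\Delta u+F$, which gives
\[
\tfrac{d}{dt}M_w(t)=2\Im\!\int\!\nabla w\cdot\bigl(\nabla(\pa_t u)\,\bar u+\nabla u\,\overline{\pa_tu}\bigr)dx
=2\Re\!\int\!\nabla w\cdot\bigl(\nabla u\,\Delta\bar u-\nabla(\Delta u)\bar u\bigr)dx+2\!\int\!w_j\{F,u\}_P^j\,dx.
\]
The first piece is the free–Schr\"odinger contribution; integrating by parts twice to move all derivatives off $u$ (symmetrizing in $j,k$) produces $\int(-\Delta\Delta w)|u|^2dx+4\int w_{jk}\Re(\pa_j\bar u\,\pa_k u)\,dx$. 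This is the standard calculation; the only subtlety is justifying the integrations by parts, which is handled in the usual way by a spatial cutoff and a density argument in $H^1_a$, and this is what we shall invoke in the truncated version anyway.

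Next I would specialize to $F=\frac{a}{|x|^2}u+|x|^{-b}|u|^{2}u$ and compute the Poisson bracket $\{F,u\}_P=\Re(F\nabla\bar u-u\nabla\bar F)$ term by term. For the potential part, the $u\nabla\bar u$ contributions cancel and only $\nabla(1/|x|^{2})=-2x/|x|^{4}$ survives, giving
\[
\bigl\{\tfrac{a}{|x|^2}u,\,u\bigr\}_P=\tfrac{2ax}{|x|^{4}}|u|^2,
\]
which accounts for the $4a\int\frac{x\cdot\nabla w}{|x|^4}|u|^2\,dx$ term in \eqref{equ:virialdefint}.

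For the nonlinear piece, a direct expansion using $\nabla(|x|^{-b})=-b\,x/|x|^{b+2}$ and $|u|^{2}\nabla(|u|^2)=\tfrac12\nabla(|u|^4)$ yields
\[
\bigl\{|x|^{-b}|u|^2u,\,u\bigr\}_P=\tfrac{b\,x}{|x|^{b+2}}|u|^4-\tfrac12|x|^{-b}\nabla(|u|^4).
\]
Multiplying by $2\nabla w$, integrating, and integrating the second term by parts via $\nabla\cdot(|x|^{-b}\nabla w)=|x|^{-b}\Delta w-b\,\frac{x\cdot\nabla w}{|x|^{b+2}}$ gives
\[
2\!\int\!\nabla w\cdot\{F_{\text{nl}},u\}_P\,dx=b\!\int\!\tfrac{x\cdot\nabla w}{|x|^{b+2}}|u|^4dx+\!\int\!\Delta w\,\tfrac{|u|^4}{|x|^b}dx,
\]
exactly the last two terms of \eqref{equ:virialdefint}.

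The main obstacle is not conceptual but regularity: each integration by parts must be justified in a class where $u$ need only lie in $H^1_a$, and the integrands involve $|x|^{-2}$ and $|x|^{-b}$ singularities at the origin. I would handle this by first deriving the identity for Schwartz data smooth away from the origin, where all manipulations are unambiguous, and then passing to the limit using Hardy's inequality (to control the $\frac{|u|^2}{|x|^2}$ weight) and the Gagliardo–Nirenberg estimate \eqref{equ:gnab} (to control the $|u|^4/|x|^b$ weight) together with the Strichartz well-posedness theory in $H^1_a$. In practice, since the identity will only ever be applied to the truncated weight $w_R$ of Lemma~\ref{lem:trancdef}, for which $\nabla w_R$ is bounded and $x\cdot\nabla w_R/|x|^{4}\in L^\infty$ outside a compact set, the resulting expressions are absolutely convergent and no further regularization is needed.
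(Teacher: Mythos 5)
Your proposal is correct and is essentially the ``simple computation'' the paper invokes without writing out: differentiate $M_w$, substitute the equation, integrate by parts twice for the free part, and evaluate $\{F,u\}_P$ separately for the potential $\tfrac{a}{|x|^2}u$ and the nonlinearity $|x|^{-b}|u|^2u$; your two bracket formulas and the final integration by parts using $\nabla\cdot(|x|^{-b}\nabla w)=|x|^{-b}\Delta w-b\,\tfrac{x\cdot\nabla w}{|x|^{b+2}}$ all check out and reproduce \eqref{equ:virialdefint} exactly. One sign slip in your intermediate display: the free contribution should be $2\Re\int\nabla w\cdot\bigl(\nabla(\Delta u)\,\bar u-\nabla u\,\Delta\bar u\bigr)dx$ (the negative of what you wrote); with that correction it does integrate by parts to $\int(-\Delta\Delta w)|u|^2dx+4\int w_{jk}\Re(\pa_j\bar u\,\pa_ku)\,dx$ as you assert. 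A small caveat on your closing remark: this particular lemma is applied in the Appendix with the untruncated weights $w=|x|$ and $w=|x-y|$ (where $\Delta\Delta|x|$ is a negative multiple of a delta), not with $w_R$, so the justification of convergence should be phrased for those weights rather than deferred to the truncated case.
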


Taking $w(x)=|x|$, one can obtain
the standard Morawetz inequality.

\begin{lemma}[Classical Morawetz inequality]\label{lem:clamorz}
Let  $a>0,\;0<b<1$ and $u:\;\R\times\R^3\to\C$ solve \eqref{equ:nls} with $\lambda=1$ and $u_0\in H^1(\R^3)$. Then, there holds
\begin{equation}\label{equ:clamorze}
  \int_{\R}\int_{\R^3}\Big(\frac{|u(t,x)|^2}{|x|^3}+\frac{|u(t,x)|^4}{|x|^{1+b}}\Big)\;dx\;dt\leq  C\big(M(u_0),E(u_0)\big).
\end{equation}
\end{lemma}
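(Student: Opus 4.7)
The plan is to apply the Morawetz identity of Lemma~\ref{lem:moriden} with the weight $w(x)=|x|$, which is the classical choice in three dimensions and, under the hypotheses $a>0$ and $0<b<1$, renders every contribution to $\frac{d}{dt}M_{|x|}(t)$ manifestly nonnegative. Because $w=|x|$ fails to be smooth at the origin, I would first carry out the computation with a regularized weight such as $w_\eps(x)=\sqrt{|x|^2+\eps^2}$ and pass to the limit $\eps\to 0$ at the end; the distributional piece $-\Delta\Delta|x|=8\pi\delta_0$ contributes a nonnegative multiple of $|u(t,0)|^2$ which can simply be discarded.

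Direct computation gives $\nabla w=x/|x|$, $w_{jk}=\frac{\delta_{jk}}{|x|}-\frac{x_jx_k}{|x|^3}$, and $\Delta w=2/|x|$. Substituting into \eqref{equ:virialdefint}, the Hessian term reduces to $\frac{4}{|x|}(|\nabla u|^2-|\pa_r u|^2)\geq 0$, the inverse-square contribution becomes $\frac{4a}{|x|^3}|u|^2\geq 0$ (using $a>0$), and the two nonlinearity-driven terms combine to $\frac{b+2}{|x|^{1+b}}|u|^4\geq 0$ (using $b>0$). Dropping the angular-gradient and delta-mass terms yields
\begin{equation*}
\frac{d}{dt}M_{|x|}(t)\geq c\int_{\R^3}\Big(\frac{|u(t,x)|^2}{|x|^3}+\frac{|u(t,x)|^4}{|x|^{1+b}}\Big)\,dx
\end{equation*}
for some constant $c=c(a,b)>0$.

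It remains to control the boundary value $M_{|x|}(t)$ uniformly in time. Cauchy-Schwarz gives $|M_{|x|}(t)|\leq 2\|u(t)\|_{L^2}\|\nabla u(t)\|_{L^2}$; since $a>0$, the definition \eqref{equ:doth1def} provides $\|\nabla u\|_{L^2}\leq\|u\|_{\dot H^1_a}$, and then mass/energy conservation yield $|M_{|x|}(t)|\leq C(M(u_0),E(u_0))$ uniformly in $t$. Integrating the differential inequality over $[-T,T]$ and sending $T\to\infty$ produces \eqref{equ:clamorze}. The main obstacle is the regularization step: one must verify that the limit $\eps\to 0$ can be taken in every term of the Morawetz identity (with the singular delta contribution remaining nonnegative and hence harmlessly discarded), but this is routine given the $H^1$ a priori control on $u$ and the fact that all four positive terms we keep have matching positive limits.
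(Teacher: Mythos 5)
Your proposal is correct and follows essentially the same route as the paper: apply the Morawetz identity \eqref{equ:virialdefint} with the weight $w(x)=|x|$, observe that for $a>0$, $b>0$, $\lambda=1$ every term on the right-hand side is nonnegative (with the coercive pieces $\tfrac{4a}{|x|^3}|u|^2$ and $(b+2)\tfrac{|u|^4}{|x|^{1+b}}$ giving the claimed left-hand side), and bound $\sup_t|M_{|x|}(t)|$ by Cauchy--Schwarz together with mass/energy conservation. Your added care with the regularization $w_\eps=\sqrt{|x|^2+\eps^2}$ and the sign of the $\delta_0$ contribution is a correct elaboration of details the paper leaves implicit.
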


Define
the  Morawetz action center $y$ as
\begin{equation}\label{equ:moractre1}
M_w^y(t):=2{\rm Im}\int_{\R^3}\nabla w(x-y)\cdot \nabla
u(x)\bar{u}(x)\;dx.
\end{equation}

Now, we define the interaction Morawetz action by
\begin{align}\label{equ:moractre12}
M^{Int}(t):=&\int_{\R^3}|u(y)|^2 M_w^y(t)\;dy\\\nonumber
 =&2{\rm
Im}\iint_{\R^3\times\R^3}|u(y)|^2\nabla w(x-y)\cdot \nabla
u(x)\bar{u}(x)\;dx\;dy.
\end{align}
Note that
$$\pa_t(|u|^2)=2{\rm Im}(-\Delta u\bar{u}+F\bar{u})=-2\pa_j{\rm Im}(\pa_ju\bar{u})+2{\rm Im}(F\bar{u}),$$
this together with Lemma \ref{lem:moriden} implies
\begin{align}\nonumber
\frac{d}{dt}M^{Int}(t)=&\int_{\R^3}\pa_t(|u(y)|^2)
M_w^y(t)\;dy+\int_{\R^3}|u(y)|^2\frac{d}{dt}M_w^y(t)\;dy\\\label{equ:main1}
=&-4\iint {\rm Im}(\pa_ju\bar{u})(y) w_{jk}(x-y){\rm
Im}(\pa_ku\bar{u})(x)\;dx\;dy\\\label{equ:main2} &+4\iint {\rm
Im}(F\bar{u})(y)\nabla w(x-y)\cdot{\rm Im}(\nabla
u\bar{u})(x)\;dx\;dy\\\label{equ:main3} &+\iint (-\Delta\Delta
w)(x-y)|u(x)|^2|u(y)|^2\;dx\;dy\\\label{equ:main4} &+4\iint
|u(y)|^2w_{jk}(x-y)\Re(\pa_j\bar{u}\pa_ku)(x)\;dx\;dy\\\label{equ:main4}
&+2\iint |u(y)|^2w_j(x-y)\{F,u\}_P^j(x)\;dx\;dy.
\end{align}
Hence, taking $w(x-y)=|x-y|$, one can obtain the interaction Morawetz identity.
\begin{lemma}[Interaction Morawetz identity]\label{lem:intmoridet}
Assume that $u:\;\R\times\R^3\to\C$ solves \eqref{equ:nls} with $\lambda=1$, then
\begin{align}\nonumber
&\frac{d}{dt}M^{Int}(t)\\\label{equ:main1-1}
=&-4\iint {\rm Im}(\pa_ju\bar{u})(y)\Big(\frac{\delta_{jk}}{|x-y|}-\frac{(x-y)_j(x-y)_k}{|x-y|^3}\Big){\rm
Im}(\pa_ku\bar{u})(x)\;dx\;dy\\\label{equ:main2-1} &+4\iint
|u(y)|^2\Big(\frac{\delta_{jk}}{|x-y|}-\frac{(x-y)_j(x-y)_k}{|x-y|^3}\Big)\Re(\pa_j\bar{u}\pa_ku)(x)\;dx\;dy
\\\label{equ:main3-1} &+c\int_{\R^3} |u(x)|^4\;dx\\\label{equ:main4-2}
&+4\iint |u(y)|^2\frac{x}{|x|^4}\cdot\frac{x-y}{|x-y|}|u(x)|^2\;dx\;dy\\\label{equ:main5-2}
&+b\iint |u(y)|^2\frac{x}{|x|^{2+b}}\cdot\frac{x-y}{|x-y|}|u(x)|^4\;dx\;dy\\\label{equ:main6-2}
&+2\iint |u(y)|^2\frac{|u(x)|^4}{|x|^{b}}\;dx\;dy.
\end{align}
\end{lemma}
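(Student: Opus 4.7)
The proof is a direct, if somewhat tedious, substitution of $w(x-y)=|x-y|$ into the general formula for $\tfrac{d}{dt}M^{Int}(t)$ already laid out in \eqref{equ:main1}--\eqref{equ:main4}, combined with the explicit form of the nonlinearity
\[
F(t,x) \;=\; \tfrac{a}{|x|^2}u + |x|^{-b}|u|^2u
\]
coming from \eqref{equ:nls} with $\lambda=1$. I would first record the classical identities in $\R^3$:
\[
\pa_j|x| = \tfrac{x_j}{|x|},\quad \pa_{jk}|x| = \tfrac{\delta_{jk}}{|x|} - \tfrac{x_jx_k}{|x|^3},\quad \Delta|x| = \tfrac{2}{|x|},\quad -\Delta\Delta|x| = 8\pi\,\delta_0,
\]
so that the Hessian $w_{jk}(x-y)$ has the explicit form appearing in \eqref{equ:main1-1}--\eqref{equ:main2-1} and is positive semi-definite (projection onto the orthogonal complement of $x-y$, scaled by $|x-y|^{-1}$), while $-\Delta\Delta w$ collapses the double integral in \eqref{equ:main3} to a multiple of $\int|u(x)|^4\,dx$. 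This yields \eqref{equ:main3-1}.

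Next, plugging $w_{jk}(x-y)$ into \eqref{equ:main1} and into the first instance of \eqref{equ:main4} reproduces the two ``mass-current'' terms \eqref{equ:main1-1} and \eqref{equ:main2-1} verbatim. A crucial observation that eliminates \eqref{equ:main2} is that for the nonlinearity at hand, $F$ is a real multiple of $u$, hence $\Im(F\bar u)\equiv0$; this kills the source-current mixing term outright and keeps the identity clean.

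It remains to process the $\{F,u\}_P$ piece appearing in the second instance of \eqref{equ:main4}. Here I would mimic, under the outer $\int|u(y)|^2\,dy$, the computation that already produced \eqref{equ:virialdefint} from \eqref{virial} in the centered case. Splitting $F$ into its inverse-square and inhomogeneous-nonlinear pieces, integrating by parts in $x$, and using $\nabla w(x-y)=\tfrac{x-y}{|x-y|}$ together with $\Delta w(x-y)=\tfrac{2}{|x-y|}$, one recovers the inverse-square contribution \eqref{equ:main4-2} (with the weight $\tfrac{x}{|x|^4}\!\cdot\!\tfrac{x-y}{|x-y|}$), the radial-derivative contribution \eqref{equ:main5-2} (from $\nabla|x|^{-b}$ contracting with $\nabla w$), and the remaining divergence contribution \eqref{equ:main6-2} (from $\Delta w$ paired with $\tfrac{|u|^4}{|x|^b}$). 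Assembling the five pieces gives the lemma.

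The main obstacle, modulo bookkeeping, is the distributional identity $-\Delta\Delta|x-y|=8\pi\delta(x-y)$, which is what manufactures the coercive $\int|u|^4$ term driving the whole argument; it must be applied against the somewhat rough density $|u(x)|^2|u(y)|^2$. The standard remedy is to first run the computation for smooth, rapidly decaying data, for which every integration by parts is legitimate and the singularity at $x=y$ is harmless, and then pass to the limit in $H^1$ using the well-posedness theory together with continuity of $M^{Int}$. The residual $1/|x-y|$-singularities in the cross-current terms \eqref{equ:main1-1}--\eqref{equ:main2-1} are locally integrable against $|u|^2|\nabla u|^2$ on $\R^3\times\R^3$ by Hardy's inequality, so no further care is needed once the mollification is in place.
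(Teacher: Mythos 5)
Your proposal is correct and follows essentially the same route as the paper: substitute $w(x-y)=|x-y|$ into the general expansion \eqref{equ:main1}--\eqref{equ:main4}, observe that $F\bar u$ is real so $\Im(F\bar u)\equiv 0$ kills the mass-bracket term \eqref{equ:main2}, use $-\Delta\Delta|x-y|=8\pi\delta(x-y)$ to produce \eqref{equ:main3-1}, and expand the $\{F,u\}_P$ contribution exactly as in \eqref{equ:virialdefint}; your mollification remark for the distributional step is a detail the paper leaves implicit. One point worth recording: carried out as you describe, the computation actually yields a coefficient $4a$ rather than $4$ in front of \eqref{equ:main4-2} and an additional factor $\tfrac{1}{|x-y|}$ (coming from $\Delta_x|x-y|=\tfrac{2}{|x-y|}$) inside \eqref{equ:main6-2}, so the identity as printed contains two small typographical slips that your derivation would correct.
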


By the same argument as in Killip-Visan \cite{KV}, we know that
$$\eqref{equ:main1-1}+\eqref{equ:main2-1}\geq0.$$
On the other hand, by the definition of $M^{Int}(t)$, we know that
$$|M^{Int}(t)|\lesssim \|u(t)\|_{L_x^2}^3\|u(t)\|_{\dot{H}^1}\leq  C\big(M(u_0),E(u_0)\big).$$
Hence, we obtain
\begin{align*}
  &\int_{\R}\int_{\R^3}|u(t,x)|^4\;dx\;dt\\
  \leq & 2\sup_{t\in\R}|M^{Int}(t)|+\|u_0\|_{L_x^2}^2\int_{\R}\int_{\R^3}\Big(\frac{|u(t,x)|^2}{|x|^3}+\frac{|u(t,x)|^4}{|x|^{1+b}}\Big)\;dx\;dt\\
  \lesssim&C\big(M(u_0),E(u_0)\big),
\end{align*}
where we have used Lemma \ref{lem:clamorz}. Therefore, we conclude the proof of Theorem \ref{thm:ime}.

\begin{center}

\end{center}

\end{document}